\newtheorem{theorem}{Theorem}[section]
\newtheorem{lemma}[theorem]{Lemma}
\newtheorem{prop}[theorem]{Proposition}
\newtheorem{remark}[theorem]{Remark}
\newcommand{\F}{\mathbb{F}}
\numberwithin{equation}{section}
\DeclareMathOperator{\SL}{SL}
 \DeclareMathOperator{\GO}{GO}
\DeclareMathOperator{\SO}{SO} 
\DeclareMathOperator{\PSL}{PSL} 
\DeclareMathOperator{\PSU}{PSU}  \DeclareMathOperator{\SU}{SU}
     \DeclareMathOperator{\Sym}{Sym}
 \DeclareMathOperator{\bd}{bd}
\DeclareMathOperator{\diag}{diag} 
 \DeclareMathOperator{\Sl}{Sl}
\title{\vspace{-1cm} \hfill{%\normalsize УДК 512.542
	} \\
On splitting of the normalizers of maximal tori in finite groups of Lie type}
\author{A.A. Galt, A.M. Staroletov\footnote{The work is supported by RAS Fundamental Research Program (project  FWNF-2022-0002).}}
\date{\vspace{-5ex}}
\begin{document}
%\subjclass[2000]{Primary 54C40, 14E20; Secondary 46E25, 20C20}
\newcommand{\Addresses}{{% additional braces for segregating \footnotesize
  \bigskip
  \footnotesize

 Alexey~Galt, \textsc{Sobolev Institute of Mathematics, Novosibirsk, Russia;}\par\nopagebreak
\textsc{Novosibirsk State University, Novosibirsk, Russia;}\par\nopagebreak
\textit{E-mail address: } \texttt{galt84@gmail.com}

\medskip

Alexey~Staroletov, \textsc{Sobolev Institute of Mathematics, Novosibirsk, Russia;}\par\nopagebreak
\textsc{Novosibirsk State University, Novosibirsk, Russia;}\par\nopagebreak
\textit{E-mail address: } \texttt{staroletov@math.nsc.ru}
}}

\maketitle
\begin{flushright}
{\it Dedicated to V. D. Mazurov on the occasion of his 80th birthday}
\end{flushright}

\begin{abstract}
Let $G$ be a finite group of Lie type and $T$ a maximal torus of $G$. In this paper we complete the study of the question of the existence of a complement for the torus $T$ in its algebraic normalizer $N(G,T)$. It is proved that every maximal torus of the group $G\in\{G_2(q), {}^2G_2(q), {}^3D_4(q)\}$ has a complement in its algebraic normalizer. The remaining twisted classical groups ${}^2A_n(q)$ and ${}^2D_n(q)$ are also considered.
\end{abstract}

{\bf Keywords:} finite group of Lie type, twisted group of Lie type, Weyl group, maximal torus, algebraic normalizer

\section{Introduction}

The splitting problem for the normalizer of a maximal torus was first formulated by J.~Tits~\cite{Tits}. Let $\overline{G}$ be a simple connected linear algebraic group over the algebraic closure $\overline{\F}_p$ of the prime field of characteristic $p$. Let $\sigma$ be a Steinberg endomorphism and $\overline{T}$ a maximal $\sigma$-invariant torus of $\overline{G}$. It is well known that all maximal tori are conjugate in $\overline{G}$~\cite[Corollary 6.5]{MalleTester} and  the quotient group $N_{\overline{G}}(\overline{T})/\overline{T}$ is isomorphic to the Weyl group $W$ of $\overline{G}$. The natural question is for which groups $\overline{G}$ the normalizer $N_{\overline{G}}(\overline{T})$ splits over $\overline{T}$. The answer was obtained independently in~\cite{AdamsHe} and in a series of papers~\cite{Galt1,Galt2,Galt3,Galt4}. The same problem for the Lie groups was solved in~\cite{LieGroups}.

A similar question can be formulated for finite groups of Lie type. Let $G$ be a finite group of Lie type. Suppose that $T=\overline{T}\cap G$ is a maximal torus of $G$ and $N(G,T)=N_{\overline{G}}(\overline{T})\cap G$ is its algebraic normalizer in $G$. It is known that in the case of finite groups the maximal tori need not be conjugate in $G$. The general problem is to describe groups $G$ and their maximal tori $T$ such that $N(G,T)$ splits over $T$. This problem has been solved for groups of Lie types $A_n$, $B_n$, $C_n$, $D_n$, $E_6$, $E_7$, $E_8$, and $F_4$ in~\cite{Galt2,Galt3,Galt4, GS,GS2,GS3}. Moreover, in the case of $F_4(q)$ supplements of minimal order for the maximal torus were found if there is no complement.

J. Adams and X. He in~\cite{AdamsHe} considered a related question: what is the minimal  order of a lift for an element $w\in W$ to $N_{\overline{G}}(\overline{T})$?
It is easy to see that if the order of $w$ is $d$, then the minimal order of a lift for $w$ is either $d$ or $2d$. Clearly, if $N_{\overline{G}}(\overline{T})$ splits over $\overline{T}$, then the minimal order is $d$.
For algebraic groups $\overline{G}$ of Lie type $E_6$, $E_7$, $E_8$, or $F_4$, the normalizer of the torus  does not split. Nevertheless, in~\cite{GS,GS2} it is proved that in the case of finite groups $E_6(q)$, $E_7(q)$, $E_8(q)$ the minimal order of a lift is always equal to $d$. In particular, the minimal order of a lift is $d$ in the corresponding algebraic groups.

For finite groups of Lie type $F_4$ in~\cite{AdamsHe}, minimal orders of lifts are found for the elements belonging to the so-called regular or elliptic conjugacy classes of $W$. In particular, J.\,Adams and X.\,He showed that there exists an elliptic element of order four for which any of its lift has order eight. In~\cite{GS3}, minimal orders of lifts are found for all elements of the Weyl group in $F_4(q)$.

This paper completes the study of the above question for the finite groups of Lie type.
The first result is devoted to exceptional groups of Lie type.

\begin{theorem}\label{th}
Let $G\in\{G_2(q), {}^2G_2(q), {}^3D_4(q)\}$. If $T$ is a maximal torus of $G$ and $N$ is its algebraic normalizer, then $N$ splits over $T$.
\end{theorem}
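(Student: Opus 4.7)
The plan is to use the Deligne--Lusztig parametrization of maximal tori. Fix a $\sigma$-stable maximally split torus $\overline{T}_0 \le \overline{G}$ and set $W = N_{\overline{G}}(\overline{T}_0)/\overline{T}_0$. By Lang's theorem, every $\sigma$-stable maximal torus of $\overline{G}$ is $\overline{G}$-conjugate to $\overline{T}_w = g\overline{T}_0 g^{-1}$ with $g^{-1}\sigma(g)$ a preimage of some $w \in W$, and the $G$-classes of $T_w = \overline{T}_w \cap G$ correspond to $\sigma$-conjugacy classes of $W$. Under this correspondence $N(G,T_w)/T_w$ is canonically isomorphic to the $\sigma$-twisted centralizer $C_{W,\sigma}(w) = \{x \in W : x^{-1} w \sigma(x) = w\}$. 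So the theorem reduces to exhibiting, for each $\sigma$-class $[w]$, a subgroup of $N(G,T_w)$ mapping isomorphically onto $C_{W,\sigma}(w)$.

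The mechanism inherited from the earlier papers of this series is the following. For each simple root $\alpha$, the Tits representative $n_\alpha = x_\alpha(1) x_{-\alpha}(-1) x_\alpha(1)$ lifts the simple reflection $s_\alpha$ and satisfies $n_\alpha^2 = h_\alpha(-1)$. Given a generator $y$ of $C_{W,\sigma}(w)$, I would take a word $\dot y$ in the $n_\alpha$ lifting $y$, transport it to $N_{\overline{G}}(\overline{T}_w)$ by conjugation with $g$, and then multiply by a correction $t \in \overline{T}_0$ chosen so that (i) the result lies in $G$, and (ii) the defining relations of $C_{W,\sigma}(w)$ hold exactly. Both conditions unpack into explicit equations $t \sigma(t)^{-1} = s$ for prescribed $s \in \overline{T}_0$; their solvability, which is precisely what the $h_\alpha(-1)$ terms in the Tits relations threaten to obstruct, is the content of the claim.

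Case by case: for $G_2(q)$, $\sigma$ acts trivially on $W$ (the Weyl group of type $G_2$, of order $12$), giving six conjugacy classes whose centralizers are either $W$ itself, a cyclic group of order $6$, or a Klein four-group, and the six individual verifications reduce to short computations in the type-$G_2$ root system. For ${}^2G_2(q)$, $q = 3^{2m+1}$, the exceptional isogeny permuting long and short roots yields three $\sigma$-classes, each with small cyclic twisted centralizer, so only single-generator lifts are required. For ${}^3D_4(q)$, the triality twist on $W(D_4)$ produces seven $\sigma$-classes, most of which have cyclic $C_{W,\sigma}(w)$; their representatives can be listed using standard tables.

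The principal difficulty is the ${}^3D_4(q)$ case: the triality orbit $\{\alpha_1,\alpha_3,\alpha_4\}$ on simple roots forces lifts of $\sigma$-centralizing Weyl elements to couple $n_{\alpha_1}, n_{\alpha_3}, n_{\alpha_4}$ simultaneously, so the torus corrections cannot be peeled off one factor at a time. I expect to handle this by working in the $\F_{q^3}$-structure of $\overline{T}_w$ and exploiting the fact that the block $n_{\alpha_1} n_{\alpha_3} n_{\alpha_4}$ is permuted cyclically by $\sigma$, which reduces the correction equation to a single condition over the norm-one subtorus of $\F_{q^3}/\F_q$, solvable by Hilbert $90$. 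For the remaining classes where $C_{W,\sigma}(w)$ is non-cyclic, I would finish by restricting to a $\sigma$-stable subsystem subgroup of lower rank where splitting was already established in the earlier papers of this series.
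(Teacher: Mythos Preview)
Your overall framework is correct and matches the paper's: parametrize tori by $\sigma$-conjugacy classes in $W$, identify $N(G,T_w)/T_w$ with $C_{W,\sigma}(w)$, and lift generators of the latter via Tits representatives corrected by torus elements. However, there are factual errors in your class count and centralizer structure that undermine the plan for ${}^3D_4(q)$.

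For ${}^2G_2(q)$ there are four $\sigma$-classes, not three; this is minor since all twisted centralizers are indeed cyclic. The serious problem is ${}^3D_4(q)$: of the seven $\sigma$-classes, only \emph{one} has cyclic twisted centralizer ($\mathbb{Z}_4$). The others are $D_{12}$ (twice), $\mathbb{Z}_2\times\mathbb{Z}_2$ (twice), and $\operatorname{SL}_2(3)$ (twice). So ``most are cyclic'' is false, and your fallback of restricting to a $\sigma$-stable subsystem subgroup where splitting is already known is not a viable strategy. There is no evident subsystem subgroup of $D_4$ whose twisted Weyl-centralizer is $\operatorname{SL}_2(3)$, and even where a candidate subsystem exists, a complement inside the smaller normalizer need not extend to one in the full $N(G,T_w)$: the torus of the subsystem is strictly smaller, so relations that hold modulo the small torus may fail modulo $T_w$.

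What the paper actually does for ${}^3D_4(q)$ is direct explicit construction in each of the seven cases. For the $\operatorname{SL}_2(3)$ classes it writes down a presentation $\langle a,b \mid a^4,\,b^3,\,aba^{-1}bab,\,(b^{-1}a)^3\rangle$ and verifies (by machine) that the concrete lifts $a=n_1n_2n_3n_7$, $b=n_1n_7$ are $\sigma n$-fixed and satisfy these relations on the nose. For the $\mathbb{Z}_2\times\mathbb{Z}_2$ classes the required torus corrections are genuinely nontrivial: one must choose $\alpha\in\overline{\F}_p$ with $\alpha^{((\varepsilon q)^3+1)(\varepsilon q-1)/2}=-1$ and build $H_1,H_2\in\overline{T}_{\sigma n}$ from it. None of this falls out of Hilbert~90 or a subsystem reduction; it is bespoke computation. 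Your proposal would need to replace the last paragraph with these explicit constructions to become a proof.
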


Among the classical groups it remains to consider the twisted groups ${}^2A_n(q)$ and ${}^2D_n(q)$.
In~\cite{Galt3}, the splitting problem for maximal tori is solved for groups $A_n(q)$. For twisted groups ${}^2A_n(q)$, the result can be obtained as follows.
The structure of maximal tori in the special unitary group $\SU_n(q)$ is described in~\cite[\S2]{ButGre} and is obtained from the structure of the corresponding maximal tori in the special linear group $\SL_n(q)$ by replacing $q$ with $ -q$. Repeating results from~\cite{Galt3} with $q$ replaced by $-q$, we obtain results for the groups $\SU_n(q)$ and $\PSU_n(q)$. 

For the sake of completeness, we formulate the corresponding results. The maximal power of $2$ dividing a positive integer $n$ is denoted by $(n)_2$. We  use the notation $\SL_n^\varepsilon(q)$, where $\varepsilon=\pm$, setting $\SL_n^+(q)=\SL_n(q)$ and $\SL_n ^-(q)=\SU_n(q)$. Recall that in these cases the Weyl group is isomorphic to the symmetric group $\Sym_n$ of degree~$n$. 
It is well known that conjugacy classes in $\Sym_n$ are uniquely determined by the cycle structure of a representative, and therefore are in one-to-one correspondence with partitions of $n$, that is, representations of $n$ as a sum of positive integers.
If $n=n_1+n_2+\ldots+n_m$ is a partition, then the expression $(n_1)(n_2)\ldots(n_m)$ is called the cycle type of the corresponding conjugacy class. We identify partitions that differ by a permutation of terms.
Given a partition $n=n_1+n_2+\ldots+n_m$, we denote by $b_i$ the number of its terms equal to $n_i$ and define $a_i=n_ib_i$ for $1\leqslant i\leqslant m$.

\begin{theorem}\label{th_SU}
Let $T$ be a maximal torus of $G=\SL_{n}^\varepsilon(q)$ corresponding to an element of the Weyl group with the cycle type $(n_1)(n_2)\ldots(n_m)$.
Then $T$ has a complement in $N(G,T)$ if and only if
$q$ is even or $a_i$ is odd for some $1\leqslant i\leqslant r$.
\end{theorem}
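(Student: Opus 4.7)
The plan is to reduce Theorem~\ref{th_SU} to the already-settled case $\varepsilon=+$ (i.e.\ $G=\SL_n(q)$), established in~\cite{Galt3}, via the formal substitution $q\mapsto -q$. For $\varepsilon=+$ nothing is required beyond quoting that result, so the work is confined to the unitary case $\varepsilon=-$.

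First I would invoke the explicit description of maximal tori in $\SU_n(q)$ from~\cite[\S2]{ButGre}: a torus $T$ corresponding to a Weyl group element $w\in\Sym_n$ of cycle type $(n_1)\ldots(n_m)$ admits exactly the same combinatorial description, in terms of $n_1,\ldots,n_m$, as the maximal torus of $\SL_n(q)$ of the same cycle type, but with $q$ replaced throughout by $-q$. The same replacement carries over to the algebraic normalizer $N(G,T)$ and to the induced action of $C_W(w)=N(G,T)/T$ on $T$, so the extension $1\to T\to N(G,T)\to C_W(w)\to 1$ in the unitary case is obtained from the linear one by this substitution.

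Next I would inspect the splitting criterion obtained in~\cite{Galt3}. The obstruction to splitting of $N(G,T)$ over $T$ is concentrated at the prime $2$ and is controlled by two pieces of information: the parity of $q$ (the extension splits automatically when $q$ is even), and the combinatorial $2$-adic data of the cycle type, encoded in the parities of the quantities $a_i=n_ib_i$. Since $q$ and $-q$ share the same parity, the first condition is invariant under the substitution, while the second does not involve $q$ at all. Hence the criterion transports verbatim from $\SL_n(q)$ to $\SU_n(q)$, yielding exactly the statement of Theorem~\ref{th_SU}.

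The principal obstacle here is verification rather than invention: one must check that every cocycle computation in~\cite{Galt3} depends on $q$ only through its parity, so that the formal substitution of $-q$ for $q$ really produces the cocycle controlling the unitary extension. This amounts to a case-by-case inspection following~\cite{Galt3}, partition by partition, but it introduces no genuinely new phenomena; once this bookkeeping is carried out, the theorem follows immediately from its $\SL_n(q)$ counterpart.
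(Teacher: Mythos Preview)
Your proposal is correct and follows essentially the same approach as the paper: the paper also derives the unitary case from~\cite{Galt3} via the formal substitution $q\mapsto -q$, invoking the description of maximal tori in $\SU_n(q)$ from~\cite[\S2]{ButGre}, and notes that repeating the arguments of~\cite{Galt3} with this substitution yields the result. Your additional observation that the splitting criterion depends on $q$ only through its parity is exactly the reason the substitution is harmless, and is implicit in the paper's one-sentence justification.
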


\noindent Now we formulate the result for simple linear and unitary groups.

\begin{theorem}\label{th_PSU}
Let $T$ be a maximal torus of $G=\SL_{n}^\varepsilon(q)$ corresponding to an element of the Weyl group with the cycle type $(n_1)(n_2)\ldots(n_m)$. Let $\widetilde{T}$ and $\widetilde{N}$ be the images of $T$ and $N(G,T)$ in $\widetilde{G}=\PSL_{n}^\varepsilon (q)$.
Then $\widetilde{T}$ has a complement in~$\widetilde{N}$ if and only if one of the following holds:
\begin{itemize}
\item[{\em (1)}] $q$ is even;
\item[{\em (2)}] $a_i$ is odd for some $1\leqslant i\leqslant r$;
\item[{\em (3)}] $(n)_2<(\varepsilon q-1)_2$;
\item[{\em (4)}] $m=4$, $n_1,n_2,n_3,n_4$ are odd;
\item[{\em (5)}] $m=3$, $n_1=n_2$ is odd, $n_3$ is even, $(n_3)_2>2$, and $(n)_2\leqslant(\varepsilon q-1)_2$;
\item[{\em (6)}] $m=3$, $n_1=n_2$ is odd, $n_3$ is even, $(n_3)_2=2$, and $(n)_2\neq(\varepsilon q-1)_2$;
\item[{\em (7)}] $m=2$, $n_1,n_2$ are odd;
\item[{\em (8)}] $m=2$, $n_1,n_2$ are even, $n_1\neq n_2$, $(n)_2<d(\varepsilon q-1)_2$, where $d=\gcd\left((\frac{n_1}{2})_2,(\frac{n_2}{2})_2,(\varepsilon q-1)_2\right)$;
\item[{\em (9)}] $m=2$, $n_1,n_2$ are even, $n_1\neq n_2$, $(n_1)_2=(n_2)_2\leqslant(\varepsilon q-1)_2,$ $(\varepsilon q-1)_2(n_1)_2\leqslant(n)_2$;
\item[{\em (10)}] $m=2$, $n_1=n_2$ is even, $(n_1)_2>2$, $(n)_2\leqslant(\varepsilon q-1)_2$;
\item[{\em (11)}] $m=2$, $n_1=n_2$ is even, $(n_1)_2=2$, $(n)_2\neq(\varepsilon q-1)_2$;
\item[{\em (12)}] $m=1$.
\end{itemize}
\end{theorem}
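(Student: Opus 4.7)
The plan proceeds in two stages, exploiting the $q \mapsto -q$ bridge described just before Theorem~\ref{th_SU}. First, cases~(1) and~(2) are immediate consequences of Theorem~\ref{th_SU}: under either hypothesis, $T$ has a complement $C$ in $N(G,T)$, and then $CZ/Z$ is a complement of $\widetilde{T}$ in $\widetilde{N}$, where $Z = Z(\SL_n^\varepsilon(q))$ is cyclic of order $\gcd(n, \varepsilon q - 1)$. So I may assume $q$ is odd and every $a_i$ is even, in which case the extension $1 \to T \to N(G,T) \to W_T \to 1$ does not split.

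Following \cite{Galt3}, I would use the explicit description of $T$ as the kernel of the determinant (respectively, norm) map on $\prod_i \bigl(\mathbb{Z}/(q^{n_i} - \varepsilon^{n_i})\bigr)^{b_i}$, with $W_T$ a direct product of wreath products $(\mathbb{Z}/n_i)^{b_i} \rtimes \Sym_{b_i}$. For each cycle-block generator $w$, I would choose a standard lift $\dot{w}$ in $N(G,T)$ and compute $\dot{w}^{n_i}$ in the appropriate torus factor. Under the assumption that $a_i$ is even, this element is a nontrivial involution, and it is the local obstruction to splitting. A complement of $\widetilde{T}$ in $\widetilde{N}$ therefore exists if and only if the standard lifts can be adjusted so that the product of block obstructions belongs to $Z$, a question of $2$-adic arithmetic in the cyclic $2$-group $Z$ governed by $(n)_2$, the $(n_i)_2$, and $(\varepsilon q - 1)_2$.

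Because the authors reduce ${}^2A_n$ to $A_n$ by replacing $q$ with $-q$ (via the torus description in \cite[\S2]{ButGre}), the heart of the argument is the linear case established in \cite{Galt3}, and the twisted case follows by that substitution. I would then match the resulting arithmetic conditions with items (3)--(12), organized by $m$ and the parities of the $n_i$. Case~(12), $m = 1$, is the cleanest: the torus is a single cyclic factor and its unique obstruction involution is the scalar $-I_n$, which lies in $Z$ whenever $n$ is even, so splitting always holds here.

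The main obstacle will be the intricate $2$-adic bookkeeping in the small-$m$ cases (5)--(11), and in particular the gcd condition $d = \gcd\bigl((n_1/2)_2, (n_2/2)_2, (\varepsilon q - 1)_2\bigr)$ in case~(8). These conditions encode when two block-level obstructions from blocks of sizes $n_1$ and $n_2$ can be cancelled simultaneously in $Z$, versus only after their difference is absorbed; the dichotomies $(n_i)_2 = 2$ vs.\ $(n_i)_2 > 2$ and $n_1 = n_2$ vs.\ $n_1 \ne n_2$ reflect whether the two blocks contribute independently to the obstruction or are paired by the outer $\Sym_{b_i}$ action, and each combination has to be checked against the image of $Z$ in $T$ separately.
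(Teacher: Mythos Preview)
Your proposal is correct and follows essentially the same approach as the paper: the authors do not give a separate proof of this theorem but simply state that the $\PSL_n(q)$ case is settled in \cite{Galt3} and that the $\PSU_n(q)$ case follows by repeating those arguments with $q$ replaced by $-q$ (using the torus description from \cite[\S2]{ButGre}). Your sketch of the obstruction analysis from \cite{Galt3} is a faithful elaboration of what that reference does, so there is nothing to add.
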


The structures of maximal tori and the Weyl group in the orthogonal group $\mathrm{P}\Omega_{2n}^-(q)$ are described, for example, in~\cite[\S4]{ButGre}.
Let $n=n'+n''$, $n'=n_1+\ldots+n_k$ and $n''=n_{k+1}+\ldots+n_m$ be partitions of numbers $n'$ and $n''$, respectively, where $k$ is odd. All such possible representations of the number $n$ as a sum $n=n_1+\ldots+n_k+n_{k+1}+\ldots+n_m$ are in one-to-one correspondence with the conjugacy classes of the Weyl group. As before, we identify partitions that differ by a permutation of  terms inside each of the partitions $n'$ and $n''$. If the conjugacy class corresponds to the decomposition $n=n_1+\ldots+n_k+n_{k+1}+\ldots+n_m$, then the expression $(\overline{n_1})\ldots(\overline{n_k})(n_{k +1})\ldots(n_m)$ is called the cyclic type of this class.
Given a cycle type $(\overline{n_1})\ldots(\overline{n_k})(n_{k+1})\ldots(n_m)$, we denote by $b_i$ the number of terms equal to $n_i$ in the partition $n'$ if $1\leqslant i\leqslant k$, and in the partition $n''$ if $k+1\leqslant i\leqslant m$. Define $a_i=n_ib_i$ for $1\leqslant i\leqslant m$.

\begin{theorem}\label{th_Omega}
Let $G=\mathrm{P}\Omega_{2n}^-(q)$ with $n\geqslant4$. Let $T$ be a maximal torus of $G$ corresponding to an element of the Weyl group with the cycle type $(\overline{n_1})\ldots(\overline{n_k})(n_{k+1})\ldots(n_m)$, where $k$ is odd. Then $T$ has a complement in~$N(G,T)$ if and only if one of the following holds:
\begin{itemize}
\item[{\em (1)}] $q\not\equiv3\pmod4;$
\item[{\em (2)}] $a_i$ is odd for some $1\leqslant i\leqslant m;$
\item[{\em (3)}] $k=m$, $n_i$ is even for all $1\leqslant i\leqslant k$.
\end{itemize}
\end{theorem}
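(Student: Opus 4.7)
The plan is to follow the framework developed for the untwisted orthogonal case in \cite{Galt4} and the reduction used for Theorem~\ref{th_SU} above. Write $Z=Z(\Omega_{2n}^-(q))$ so that $G=\Omega_{2n}^-(q)/Z$. For a maximal torus $T\subset G$ of the given cycle type, the preimage $\widehat{T}$ in $\mathrm{SO}_{2n}^-(q)$ admits the explicit description from \cite[\S 4]{ButGre}:
$$\widehat{T}\cong\prod_{i=1}^{k}\Z/(q^{n_i}+1)\;\times\;\prod_{j=k+1}^{m}\Z/(q^{n_j}-1),$$
and the component group $W_T=N(G,T)/T$ decomposes as a direct product over the cycle blocks of wreath factors $\Z/n_i\wr\Sym_{b_i}$ (modulo the $D_n$-determinant relation, the parity condition $k$ odd from ${}^2D_n$, and the quotient by $Z$). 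The splitting question for $T\subset N(G,T)$ thus reduces to deciding whether the corresponding extension of $W_T$ by $T$ splits.

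For the sufficiency of (1)--(3) I would argue as follows. In case (1), when $q$ is even $|T|$ is odd and an abelian Schur--Zassenhaus argument at each odd prime produces a complement; when $q\equiv 1\pmod 4$ the $2$-part of $q-1$ is at least $4$ and absorbs the cohomological obstruction computed along the lines of \cite[\S 2]{Galt4}. In case (2) an odd $a_i=n_ib_i$ forces $n_i$ and $b_i$ both odd, so the $i$-th wreath factor contributes only odd-order terms and a complement can be assembled block by block, just as in the proof of Theorem~\ref{th_SU}. In case (3) every cycle is negative of even length, so the global $D_n$-determinant relation is automatically compatible with naive involution lifts inside each block, yielding an explicit complement.

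For necessity I would assume $q\equiv 3\pmod 4$, every $a_i$ is even, and that condition~(3) fails. Then either some $b_i\geqslant 2$ (forced when $n_i$ is odd, since $a_i$ is even) or $k<m$ with a positive cycle coexisting alongside all-even cycle lengths. In each sub-case I would produce an explicit involution $\bar w\in W_T$ (a block swap in the first sub-case, a sign involution on a positive cycle in the second) and verify by a direct computation, parallel to those in \cite[Propositions~4.2--4.3]{Galt4} and in the proof of Theorem~\ref{th_SU}, that every preimage of $\bar w$ in $N(G,T)$ has order exactly $4$; this rules out splitting.

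The main obstacle is the simultaneous tracking of three global features of $G=\mathrm{P}\Omega_{2n}^-(q)$: the $D_n$-determinant relation that distinguishes $\Omega$ inside $\Oo$, the Steinberg twist for ${}^2D_n$ which forces the $k$ odd parity, and the quotient by the center $Z$ passing from $\Omega$ to $\mathrm{P}\Omega$. These three layers interact in precisely the way needed to produce conditions (2) and (3), so the cocycle computation in the necessity step must see all of them at once. Once the cycle-by-cycle decomposition is set up correctly, each local obstruction is essentially one already handled in Theorem~\ref{th_SU} or in \cite{Galt4}.
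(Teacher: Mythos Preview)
Your outline has the right shape but two concrete gaps. First, the relevant input is \cite{Galt2} (orthogonal groups), not \cite{Galt4} (symplectic); the paper's proof is in fact almost entirely a transfer from \cite{Galt2}. The key observation is that Lemmas~15, 16, and~19 of \cite{Galt2}, stated there for even $k$ (the split case $D_n^+$), nowhere use the parity of $k$ and therefore hold verbatim for odd $k$. This single remark already disposes of the entire theorem when $m\geqslant 5$ or $q\equiv 1\pmod 4$, and also furnishes the explicit complements needed for sufficiency in cases (2) and (3). Your sufficiency sketch for~(2) is incorrect as written: when $n_i$ and $b_i$ are both odd the wreath block is $\mathbb{Z}_{2n_i}\wr\Sym_{b_i}$ (negative cycles) or $(\mathbb{Z}_{n_i}\times\mathbb{Z}_2)\wr\Sym_{b_i}$ (positive cycles), neither of odd order, so no Schur--Zassenhaus shortcut applies; one really needs the explicit lifts constructed in \cite[Lemma~15]{Galt2}.

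Second, for necessity the mechanism here is not the $\SL_n$-style ``every lift of a transposition has order $4$'' obstruction but the \emph{spinor norm}: one assumes a complement $\widetilde{H}$ in $\mathrm{P}\Omega_{2n}^-(q)$, pulls back to $H\leqslant\Omega_{2n}^-(q)$, and shows that the torus part of a specific preimage (of a block-swap $\chi_i$ when two equal odd cycles occur, or via $\tau_i$ and $\omega_j$ in the $m=2$ case) is pinned down by the relations $v^2=\pm I$ and $[v,u_j]=\pm 1$ holding in $H$ to the point that $\theta(v)=-1$; since $-1\notin(\F_q^*)^2$ when $q\equiv 3\pmod 4$, this contradicts $v\in\Omega_{2n}^-(q)$. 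The constraints coming from the \emph{other} generators $u_j$ are what force $\varepsilon=1$ in $v^2=\varepsilon I$ and hence fix the spinor norm; a single-involution test would have to rediscover this via the torus structure and would still end in a spinor-norm computation, which your plan does not mention. Only the residual cases $m\in\{2,3,4\}$ with $q\equiv 3\pmod 4$ need these direct arguments (Lemmas~\ref{m=4}--\ref{m=2}); the case $m=1$ is again \cite[Lemma~15(3)]{Galt2}.
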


We can now formulate a final theorem on splitting of the normalizer of a maximal torus in finite groups of Lie type.
\begin{theorem}\label{final}
Let $G$ be a finite simple group of Lie type and $T$ be a maximal torus of $G$. Suppose that the algebraic normalizer $N(G,T)$ splits over $T$. Then the pair $(G,T)$ is described.
\end{theorem}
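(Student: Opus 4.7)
The plan is to observe that Theorem \ref{final} is essentially a compilation statement: by the classification of finite simple groups of Lie type, $G$ belongs to one of the known families, and for each such family a criterion for splitting of $N(G,T)$ over $T$ has been established either in the earlier literature or in the present paper. The proof consists in assembling all these criteria into a single description.

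First I would enumerate the possibilities for $G$: the untwisted Chevalley families $A_n(q)$, $B_n(q)$, $C_n(q)$, $D_n(q)$, $G_2(q)$, $F_4(q)$, $E_6(q)$, $E_7(q)$, $E_8(q)$; the Steinberg twisted families ${}^2A_n(q)$, ${}^2D_n(q)$, ${}^2E_6(q)$, ${}^3D_4(q)$; and the Suzuki and Ree families ${}^2B_2(q^2)$, ${}^2G_2(q^2)$, ${}^2F_4(q^2)$. For the classical untwisted types, the description is given by \cite{Galt2, Galt3, Galt4}; for the large exceptional groups $F_4$, $E_6$, $E_7$, $E_8$ (and, by descent from the algebraic-group analysis, also ${}^2E_6$), it is given by \cite{GS, GS2, GS3}. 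The new contributions of this paper fill in the remaining cases: Theorem \ref{th} handles $G_2(q)$, ${}^2G_2(q)$, and ${}^3D_4(q)$, showing that splitting is always present; Theorems \ref{th_SU} and \ref{th_PSU} handle ${}^2A_n(q)$; and Theorem \ref{th_Omega} handles ${}^2D_n(q)$ for $n \geqslant 4$.

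The only families not yet covered are the Suzuki groups ${}^2B_2(q^2)$ and the large Ree groups ${}^2F_4(q^2)$; both live in characteristic~$2$, so every maximal torus has odd order, while $|N(G,T)/T|$ divides the twisted Weyl group, which for these families is a $2$-group. Hence $\gcd(|T|, |N(G,T)/T|) = 1$, and the Schur--Zassenhaus theorem immediately produces a complement. Tabulating all of these criteria yields the promised description of the pair $(G, T)$ and completes the proof.

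The hard part is not any single deep computation but the bookkeeping needed to reconcile notational conventions across the various sources: the sign $\varepsilon = \pm$ for linear and unitary groups, the partition data for orthogonal types and their twists, the isogeny type of $G$ (simply connected versus adjoint versus simple), and the small-rank identifications such as ${}^2A_3(q) \cong {}^2D_3(q)$. Once these are harmonized so that every pair $(G, T)$ is covered exactly once, no further argument beyond citation of the listed results is required.
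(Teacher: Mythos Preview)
Your compilation strategy matches the paper's Section~7 almost verbatim: the same case split over the families of simple groups, the same references for each family. The one substantive divergence is your handling of ${}^2F_4(q)$, and there the argument has a genuine gap.

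You claim that for ${}^2B_2$ and ${}^2F_4$ the quotient $N(G,T)/T$ is a $2$-group because it ``divides the twisted Weyl group, which for these families is a $2$-group''. For ${}^2B_2$ this is harmless, since $W(B_2)$ itself has order~$8$ and every $C_{W,\sigma}(w)$ is a subgroup of it. For ${}^2F_4$ it is false. The quotient $N(G,T)/T\simeq C_{W,\sigma}(w)=C_W(w\sigma)$ sits inside $W(F_4)$, a group of order $1152=2^7\cdot 3^2$, and it need not be a $2$-group. Concretely, for the two tori of order $q^2\pm q\sqrt{2q}+q\pm\sqrt{2q}+1$ the element $w\sigma$ is regular of order~$24$ in $W\langle\sigma\rangle$, so $C_W(w\sigma)=\langle(w\sigma)^2\rangle$ has order~$12$. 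The fixed subgroup $W^\sigma\simeq D_{16}$ is indeed a $2$-group, but there is no reason for $|C_{W,\sigma}(w)|$ to divide $|W^\sigma|$; hence your Schur--Zassenhaus step, as stated, does not go through.

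The paper disposes of ${}^2B_2(q)$ and ${}^2F_4(q)$ by a different and uniform argument: it cites \cite[Remark~2.5]{GS3}, which observes that in characteristic~$2$ one has $h_r(-1)=h_r(1)=1$, so the Tits group $\mathcal{T}=\langle n_r\mid r\in\Delta\rangle$ satisfies $\mathcal{H}=\overline{T}\cap\mathcal{T}=1$ and already provides a $\sigma$-stable section $\mathcal{T}\xrightarrow{\sim}W$ of $\overline{N}\to W$. This yields a complement in every $\overline{N}_{\sigma n}$ without any coprimality check. Replacing your Schur--Zassenhaus paragraph by this Tits-section observation makes the proof complete and brings it in line with the paper.
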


The paper is organized as follows. In Section $2$, we recall the notation and basic facts used in this paper.
Theorem~\ref{th} is proved in Sections $3$, $4$, and $5$ for the groups $G_2(q),{}^2G_2(q)$, and ${}^3D_4(q)$, respectively. Section $6$ is devoted to the proof of Theorem~\ref{th_Omega}. Finally, Theorem~\ref{final} is proved in Section $7$.

\section{Notations and preliminary results}

If a group $A$ is the product of its normal subgroup $B$ and subgroup $C$, then $C$ is called a {\it supplement} to $B$ in $A$. If, in addition, $B\cap C=1$, then $C$ is called a {\it complement} to $B$ in $A$.

By $q$ we always mean a power of a prime $p$. We write $\overline{\F}_p$ for the algebraic closure of a finite field $\F_p$ of order $p$. The symmetric group
of degree $n$ is denoted by $\Sym_n$, the dihedral group of order $2n$ denoted by $D_{2n}$ and the cyclic group of order $n$ denoted by $\mathbb{Z}_n$ or $n$ for short when used in tables.

By $\overline{G}$ we denote a simple simply connected linear algebraic group over $\overline{\F}_p$ with a root system $\Phi$, and by $\Delta=\{r_1,r_2,\ldots,r_l\}$ a fundamental root system of $\Phi$.

In what follows, we will use the notation from~\cite{CarSG}, in particular, the definitions of the elements $x_r(t)$, $n_r(\lambda)$, $(r\in\Phi, t\in\overline{\F}_p, \lambda\in \overline{\F}_p^*)$.
Unlike~\cite{CarSG}, MAGMA uses the definition $h_r(\lambda) = n_r(-1)n_r(\lambda)$, which we will adhere to in this paper. According to~\cite{CarSG}, $\overline{G}$ is generated by the elements $x_r(t)$: $G=\langle x_r(t)~|~r\in\Phi,t\in \overline{\F}_p\rangle$. 
The group $\overline{T}=\langle h_r(\lambda)~|~r\in\Delta,\lambda\in \overline{\F}_p^*\rangle$ is a maximal torus in $\overline{G} $ and $\overline{N}=\langle \overline{T},n_r~|~r\in\Delta\rangle$, where $n_r=n_r(1)$, is the normalizer of $\overline{T}$ in $\overline{G}$~\cite[\S 7.1, 7.2]{CarSG}. The Weyl group $\overline{N}/\overline{T}$ is denoted by $W$ and the natural homomorphism from $\overline{N}$ onto $W$ by $\pi$. 
Similarly to~\cite[Theorem~7.2.2]{CarSG}, we have the following equalities:
\begin{center}
	$n_s n_r n_s^{-1}=n_{w_s(r)}(\eta_{s,r}),\quad \eta_{s,r}=\pm1,$
\end{center}
\begin{center}
	$n_s h_r(\lambda)n_s^{-1}=h_{w_s(r)}(\lambda).$
\end{center}
We choose the values of $\eta_{r,s}$ in a standard way (see~\cite[\S2]{GS3}).

We follow the definition of the Steinberg endomorphism from~\cite[Definition 2.1.9]{GorLySol} and denote it by $\sigma$. Further, by $\overline{T}$ we denote a maximal $\sigma$-invariant torus. Then the action of $\sigma$ on $W$ is defined in a natural way. Elements $w_1, w_2\in W$ are called {\it $\sigma$-conjugated} if $w_1=w^{-1}w_2w^{\sigma}$ for an element $w\in W$. For  $G=\overline{G}_\sigma$ and $g\in\overline{G}$ the following statements hold.
\begin{prop}{\em\cite[Propositions~3.3.1, 3.3.3]{Car}}\label{torus}.
A torus $\overline{T}^g$ is $\sigma$-invariant if and only if $g^{\sigma}g^{-1}\in\overline{N}$.
The map $\overline{T}^g\mapsto\pi(g^{\sigma}g^{-1})$ defines a bijection between the $G$-classes of $\sigma$-invariant maximal tori of the group $\overline{G}$ and the
$\sigma$-conjugacy classes of $W$.
\end{prop}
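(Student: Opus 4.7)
The plan is to prove Proposition \ref{torus} by reducing everything to the Lang--Steinberg theorem applied to the two connected algebraic groups $\overline{G}$ and $\overline{T}$, and by carefully tracking the cocycle $g\mapsto g^{\sigma}g^{-1}$ through the exact sequence $1\to\overline{T}\to\overline{N}\to W\to 1$.

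First I would verify the preliminary equivalence: since $\overline{T}$ is $\sigma$-invariant, $(\overline{T}^{g})^{\sigma}=\overline{T}^{g^{\sigma}}$, so $\overline{T}^{g}$ is $\sigma$-invariant iff $\overline{T}^{g^{\sigma}}=\overline{T}^{g}$ iff $g^{\sigma}g^{-1}\in N_{\overline{G}}(\overline{T})=\overline{N}$. This is a direct calculation.

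Next I would show that the assignment $\overline{T}^{g}\mapsto\pi(g^{\sigma}g^{-1})$ is well defined on $G$-conjugacy classes, with image in $\sigma$-conjugacy classes of $W$. If $\overline{T}^{g_1}$ and $\overline{T}^{g_2}$ are $G$-conjugate, then $g_2=n\,g_1\,x$ for some $n\in\overline{N}$ and $x\in G=\overline{G}_{\sigma}$. Using $x^{\sigma}=x$, one computes
\[
g_2^{\sigma}g_2^{-1}=n^{\sigma}\bigl(g_1^{\sigma}g_1^{-1}\bigr)n^{-1}\cdot\bigl(\text{element of }\overline{T}\bigr),
\]
and applying $\pi$ (using that the $\sigma$-action on $W$ is induced by the $\sigma$-action on $\overline{N}$) gives $\pi(g_2^{\sigma}g_2^{-1})=\pi(n)^{\sigma}\pi(g_1^{\sigma}g_1^{-1})\pi(n)^{-1}$, i.e.\ a $\sigma$-conjugate. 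Changing the representative $g$ within the same torus (i.e.\ multiplying on the left by an element of $\overline{N}$) similarly produces a $\sigma$-conjugate in $W$, so the map descends.

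For surjectivity, given $w\in W$ pick any lift $n\in\overline{N}$. Since $\overline{G}$ is connected, the Lang--Steinberg theorem provides $g\in\overline{G}$ with $g^{\sigma}g^{-1}=n$; then $\overline{T}^{g}$ is $\sigma$-invariant and maps to $w$. The main obstacle, as I see it, is injectivity, because the kernel $\overline{T}$ of $\pi$ can in principle enlarge a $\sigma$-conjugacy class of $W$ into several $\sigma$-conjugacy classes in $\overline{N}$. Here is where connectedness of $\overline{T}$ enters: if $n_i=g_i^{\sigma}g_i^{-1}$ and $\pi(n_1)=\pi(m)^{-1}\pi(n_2)\pi(m)^{\sigma}$ for some lift $m\in\overline{N}$, then $n_1=t\,m^{-1}n_2 m^{\sigma}$ for some $t\in\overline{T}$. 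The second application of Lang--Steinberg, this time to $\overline{T}$ with the twisted Steinberg endomorphism $s\mapsto (m^{-1}n_2 m^{\sigma})^{-1} s^{\sigma}(m^{-1}n_2 m^{\sigma})$, writes $t=s^{-1}\cdot (m^{-1}n_2 m^{\sigma})^{-1} s^{\sigma}(m^{-1}n_2 m^{\sigma})$ for some $s\in\overline{T}$. Absorbing $s$ into $m$ produces $m'\in\overline{N}$ with $n_1=(m')^{-1}n_2(m')^{\sigma}$, and then a direct check shows that $h:=g_2 m' g_1^{-1}$ satisfies $h^{\sigma}=h$, so $h\in G$ and $h^{-1}\overline{T}^{g_2}h=\overline{T}^{g_1}$. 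This closes injectivity and completes the bijection.
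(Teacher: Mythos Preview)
The paper does not supply its own proof of this proposition; it is quoted verbatim from Carter \cite[Propositions~3.3.1, 3.3.3]{Car} and used as a black box. Your outline is exactly the standard argument given there: Lang--Steinberg on $\overline{G}$ for surjectivity, and Lang--Steinberg on the connected group $\overline{T}$ (with a twisted Frobenius coming from an element of $\overline{N}$) to absorb the $\overline{T}$-ambiguity and obtain injectivity.

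Two small bookkeeping points, neither of which affects the strategy. First, your well-definedness computation actually yields $\pi(g_2^{\sigma}g_2^{-1})=\pi(n)^{\sigma}\,\pi(g_1^{\sigma}g_1^{-1})\,\pi(n)^{-1}$, i.e.\ the relation $w_2=v^{\sigma}w_1v^{-1}$, whereas in the injectivity step you start from $w_1=v^{-1}w_2v^{\sigma}$; you should use the \emph{same} twisted-conjugacy relation in both halves so that the two directions fit together. Second, in the convention $A^{g}=g^{-1}Ag$ used in the paper, the element $h=g_2m'g_1^{-1}$ does not carry $\overline{T}^{g_2}$ to $\overline{T}^{g_1}$. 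Once the lifted relation is written as $n_2=(m')^{\sigma}n_1(m')^{-1}$ in $\overline{N}$, the correct choice is $h=g_2^{-1}m'g_1$: then $(\overline{T}^{g_2})^{h}=\overline{T}^{m'g_1}=\overline{T}^{g_1}$, and substituting $n_i=g_i^{\sigma}g_i^{-1}$ into $n_2=(m')^{\sigma}n_1(m')^{-1}$ gives exactly $(g_2^{\sigma})^{-1}(m')^{\sigma}g_1^{\sigma}=g_2^{-1}m'g_1$, i.e.\ $h^{\sigma}=h$. With these cosmetic corrections your proof goes through.
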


\begin{prop}{\em\cite[Lemma~1.2]{ButGre}}\label{prop2.5}.
Let $n=g^{\sigma}g^{-1}\in\overline{N}$. Then $(\overline{T}^g)_\sigma=(\overline{T}_{\sigma n})^g$, where $n$ acts on $\overline{T}$ by conjugation.
\end{prop}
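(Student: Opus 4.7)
The plan is to verify the equality by a single direct computation. Using the bijection $\varphi\colon \overline{T}\to\overline{T}^g$ defined by $\varphi(t)=g^{-1}tg$, I would show that under $\varphi$ the fixed-point set of the twisted endomorphism $\sigma n$ on $\overline{T}$ maps bijectively onto the fixed-point set of $\sigma$ on $\overline{T}^g$, which gives exactly the asserted identity.

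First, I would record a well-definedness check. Since $\sigma$ stabilizes $\overline{T}$ by assumption and $n\in\overline{N}$ normalizes $\overline{T}$, the prescription $\sigma n\colon t\mapsto n^{-1}t^\sigma n$ is a well-defined endomorphism of $\overline{T}$, so its fixed-point subgroup $\overline{T}_{\sigma n}$ makes sense; and by Proposition~\ref{torus}, $\overline{T}^g$ is itself $\sigma$-invariant, so $(\overline{T}^g)_\sigma$ makes sense as well. The core calculation is then
\[
(g^{-1}tg)^\sigma=(g^\sigma)^{-1}\,t^\sigma\,g^\sigma=g^{-1}n^{-1}\,t^\sigma\, n g= g^{-1}\bigl(n^{-1}t^\sigma n\bigr)g,
\]
where the middle equality uses the hypothesis $g^\sigma g^{-1}=n$, i.e.\ $g^\sigma=ng$. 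Comparing with $g^{-1}tg$, one sees that $\varphi(t)=g^{-1}tg$ lies in $(\overline{T}^g)_\sigma$ if and only if $n^{-1}t^\sigma n=t$, which is exactly the condition that $t\in\overline{T}_{\sigma n}$. Consequently $\varphi$ restricts to a bijection of $\overline{T}_{\sigma n}$ onto $(\overline{T}^g)_\sigma$, giving $(\overline{T}^g)_\sigma=g^{-1}\overline{T}_{\sigma n}g=(\overline{T}_{\sigma n})^g$.

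There is no substantial obstacle here: the result is essentially a one-line conjugation identity. The only care required is with conventions, namely that $\sigma$ is a group endomorphism (so it commutes with inversion and respects products) and that ``$n$ acts on $\overline{T}$ by conjugation'' means $t^n=n^{-1}tn$, so that the twisted endomorphism $\sigma n$ takes $t$ to $n^{-1}t^\sigma n$; with those conventions in place the calculation above closes immediately, and no further structure-theoretic input about $\overline{G}$ is needed.
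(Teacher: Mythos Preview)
Your proof is correct; the computation $(g^{-1}tg)^\sigma=g^{-1}(n^{-1}t^\sigma n)g$ is exactly the right identity, and the conclusion follows immediately. The paper does not give its own proof of this proposition but simply cites it from \cite{ButGre}, so there is nothing to compare against; your direct conjugation argument is the standard one and is entirely adequate.
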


\begin{prop}{\em\cite[Proposition~3.3.6]{Car}}\label{p:normalizer}.
Let $g^{\sigma}g^{-1}\in\overline{N}$ and $\pi(g^{\sigma}g^{-1})=w$. Then $$(N_{\overline{G}}({\overline{T}}^g))_{\sigma}/({\overline{T}}^g)_{\sigma}\simeq C_{W,\sigma}(w)=\{x\in W~|~x^{-1}wx^{\sigma}=w\}.$$
\end{prop}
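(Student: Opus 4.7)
The idea is to transport the $\sigma$-action from $\overline{N}^g$ back to $\overline{N}$, descend to the Weyl group, and apply the Lang--Steinberg theorem to the short exact sequence $1\to\overline{T}\to\overline{N}\to W\to 1$.

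Since all maximal tori are conjugate, $N_{\overline{G}}(\overline{T}^g)=g^{-1}\overline{N}g$. Consider the conjugation isomorphism $\phi\colon\overline{N}\to g^{-1}\overline{N}g$, $\phi(m)=g^{-1}mg$. A direct calculation using $g^{\sigma}=ng$ gives $\phi(m)^{\sigma}=\phi(n^{-1}m^{\sigma}n)$, so $\phi$ intertwines $\sigma$ on $\overline{N}^g$ with the twisted endomorphism $\tau\colon\overline{N}\to\overline{N}$ defined by $\tau(m)=n^{-1}m^{\sigma}n$. In particular $\phi$ restricts to isomorphisms $\overline{N}^{\tau}\xrightarrow{\sim}(\overline{N}^g)_\sigma$ and $\overline{T}^{\tau}\xrightarrow{\sim}(\overline{T}^g)_\sigma$. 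Hence it is enough to identify $\overline{N}^{\tau}/\overline{T}^{\tau}$ with $C_{W,\sigma}(w)$.

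Because $n\in\overline{N}$ normalizes $\overline{T}$ and $\sigma$ stabilizes $\overline{T}$, the map $\tau$ preserves $\overline{T}$ and descends to an endomorphism $\overline{\tau}$ of $W=\overline{N}/\overline{T}$ with $\overline{\tau}(x)=w^{-1}x^{\sigma}w$, where $w=\pi(n)$. The fixed-point set $W^{\overline{\tau}}$ is then (up to the standard inversion convention used to pass between $w$ and $w^{-1}$) exactly $\{x\in W\mid x^{-1}wx^{\sigma}=w\}=C_{W,\sigma}(w)$. The projection $\pi\colon\overline{N}^{\tau}\to W$ has kernel $\overline{T}^{\tau}$ and image contained in $W^{\overline{\tau}}$, so the content of the proposition reduces to the surjectivity of $\overline{N}^{\tau}\to W^{\overline{\tau}}$.

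This surjectivity is the only substantive point, and it is handled by Lang--Steinberg. The restriction $\tau|_{\overline{T}}$ is the composite of the Steinberg endomorphism $\sigma|_{\overline{T}}$ with conjugation by $n^{-1}$, i.e.\ an algebraic automorphism of the connected torus $\overline{T}$; hence $\tau|_{\overline{T}}$ is itself a surjective endomorphism of $\overline{T}$ with finite fixed-point subgroup. Lang--Steinberg thus ensures that $t\mapsto t^{-1}\tau(t)$ is surjective on $\overline{T}$. Given $x\in W^{\overline{\tau}}$, choose any lift $m\in\overline{N}$; then $m^{-1}\tau(m)\in\overline{T}$, and by surjectivity one can write $m^{-1}\tau(m)=t^{-1}\tau(t)$ for some $t\in\overline{T}$, whence $mt^{-1}\in\overline{N}^{\tau}$ projects to $x$. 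This produces the short exact sequence $1\to\overline{T}^{\tau}\to\overline{N}^{\tau}\to W^{\overline{\tau}}\to 1$, and combining with the isomorphisms induced by $\phi$ yields $(N_{\overline{G}}(\overline{T}^g))_\sigma/(\overline{T}^g)_\sigma\simeq C_{W,\sigma}(w)$. The main obstacle is simply this Lang--Steinberg verification for the twisted endomorphism $\tau|_{\overline{T}}$; the remainder of the argument is transport of structure through $\phi$ and the exactness of fixed points in $1\to\overline{T}\to\overline{N}\to W\to 1$.
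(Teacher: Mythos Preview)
The paper does not give its own proof of this proposition: it is quoted verbatim from Carter's book \cite[Proposition~3.3.6]{Car} and used as a black box. So there is no ``paper's proof'' to compare against; what you have written is essentially the standard argument behind Carter's result, namely transport of structure via $m\mapsto g^{-1}mg$ to replace $\sigma$ on $\overline{N}^g$ by the twisted map $\tau(m)=n^{-1}m^{\sigma}n$ on $\overline{N}$, followed by Lang--Steinberg on the connected torus $\overline{T}$ to obtain surjectivity of $\overline{N}^{\tau}\to W^{\overline{\tau}}$. That is exactly Carter's line of reasoning, and your execution of it is correct.

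One small point deserves tightening. With $\tau(m)=n^{-1}m^{\sigma}n$ one finds $W^{\overline{\tau}}=\{x\in W: wx=x^{\sigma}w\}$, whereas the set $C_{W,\sigma}(w)$ as defined in the statement is $\{x\in W: xw=wx^{\sigma}\}$. These are not literally the same subset; they are exchanged by $x\mapsto x^{-1}$, which is an anti-isomorphism, hence yields a genuine group isomorphism $W^{\overline{\tau}}\cong C_{W,\sigma}(w)$. Your parenthetical ``up to the standard inversion convention'' is correct but a little casual; it would be cleaner to state explicitly that inversion realises the required isomorphism, rather than leaving it as a convention remark.
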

It follows from Proposition~\ref{prop2.5} that $({\overline{T}}^g)_{\sigma}=(\overline{T}_{\sigma n})^g$ and $(N_{\overline{G}}({\overline{T}}^g))_{\sigma}=(\overline{N}^g)_{\sigma}=(\overline{N}_{\sigma n})^g$. Therefore, $({\overline{T}}^g)_{\sigma}$ has a complement in its algebraic normalizer if and only if there exists a complement to $\overline{T}_{\sigma n}$ in $\overline{N}_{\sigma n}$.

The following remark shows that the splitting of an algebraic normalizer does not depend on the choice of a torus corresponding to the element $w$.

\begin{remark}\label{r:nonsplit}
Let $n$ and $w$ be as in Propositions~\ref{prop2.5} and~\ref{p:normalizer}, respectively. Suppose that $n_1=g_1^{\sigma}g_1^{-1}\in\overline{N}$ and $\pi(n_1)=w$.
Consider two maximal tori $({\overline{T}}^g)_{\sigma}$ and $({\overline{T}}^{g_1})_{\sigma}$ of $G$ corresponding to $w$. By Proposition~\ref{torus} there exists an element $x\in G$ such that $({\overline{T}}^g)^x={\overline{T}}^{g_1}$. Then
$$(N_{\overline{G}}(\overline{T}^{g_1}))_\sigma=(N_{\overline{G}}(\overline{T}^{gx}))_\sigma
=((N_{\overline{G}}(\overline{T}^{g}))^x)_\sigma=((N_{\overline{G}}(\overline{T}^{g} ))_\sigma)^x,$$
where the latter equality follows from the fact that $x\in G=\overline{G}_\sigma$.

Thus, if maximal tori are $G$-conjugate, then their algebraic normalizers are $G$-conjugate. Therefore, we can choose a convenient for calculations representative $n_1$ with $\pi(n_1)=w$ in the proof of Theorem~\ref{th}.
\end{remark}

For brevity, for an arbitrary $r_i\in\Phi$ we write $w_i$, $h_i$, and $n_i$ instead of $w_{r_i}$, $h_{r_i}(-1)$, and $n_{r_i}$, respectively. Every element $H$ of $\overline{T}$ can be written as $H=h_{r_1}(\lambda_1)h_{r_2}(\lambda_2)\ldots h_{r_l}(\lambda_l)$ which we simplify to $(\lambda_1,\lambda_2,\ldots,\lambda_l)$.

Define $\mathcal{T}=\langle n_r~|~ r\in\Delta\rangle$ and $\mathcal{H}=\overline{T}\cap\mathcal{T}$. According to~\cite[\S4.6]{Tits}, we have $\mathcal{H}=\langle h_r~|~r\in\Delta\rangle$ and $\mathcal{T}/\mathcal{H}\simeq W$.
In particular, if $q$ is odd, then $\mathcal{H}$ is an elementary abelian $2$-group.

For the untwisted groups of Lie type, the following lemma holds, which will be used for the group $G_2(q)$.
\begin{lemma}{\em \cite[Lemma~3.1]{GS}}\label{normalizer}
Let $g\in\overline{G}$ such that $n=g^\sigma g^{-1}\in\overline{N}$. Suppose that $H\in \overline{T}$ and $u\in\mathcal{T}$. Then

(1) $Hu\in\overline{N}_{\sigma n}$ if and only if $H=H^{\sigma n}[n,u];$

(2) if $H\in \mathcal{H}$, then $Hu\in\overline{N}_{\sigma n}$ is equivalent to the equality $[n,Hu]=1$.
\end{lemma}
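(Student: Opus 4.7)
My plan is to prove both parts by direct computation, using the explicit form of the twisted Steinberg endomorphism $\sigma_n$ on $\overline{G}$, namely $x^{\sigma n}=n\sigma(x)n^{-1}$, so that $\overline{N}_{\sigma n}=\{x\in\overline{N}\mid n\sigma(x)n^{-1}=x\}$. The key structural input from the untwisted setting is that $\sigma$ fixes every element of $\mathcal{T}$ pointwise: indeed, the generators $n_r=n_r(1)$ of $\mathcal{T}$ are words in $x_r(1)$ and $x_{-r}(-1)$, whose coefficients lie in the prime field, so $\sigma(n_r)=n_r$ and hence $\sigma(u)=u$ for every $u\in\mathcal{T}$.

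For part (1), I would simply expand
\[
(Hu)^{\sigma n}=n\sigma(H)\sigma(u)n^{-1}=n\sigma(H)n^{-1}\cdot nun^{-1}=H^{\sigma n}\cdot nun^{-1}.
\]
The condition $Hu\in\overline{N}_{\sigma n}$ is therefore equivalent to $Hu=H^{\sigma n}\cdot nun^{-1}$, and multiplying on the right by $u^{-1}$ rewrites this as
\[
H=H^{\sigma n}\cdot nun^{-1}u^{-1}=H^{\sigma n}[n,u],
\]
which is the stated equivalence.

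For part (2), I would use that every generator $h_r=h_r(-1)$ of $\mathcal{H}$ has order dividing $2$ and its ``coordinate'' $-1$ lies in the prime field, so $\sigma(H)=H$ for every $H\in\mathcal{H}$; consequently $H^{\sigma n}=nHn^{-1}$. A direct commutator calculation gives
\[
[n,Hu]=n(Hu)n^{-1}(Hu)^{-1}=nHn^{-1}\cdot nun^{-1}\cdot u^{-1}H^{-1}=\bigl(nHn^{-1}[n,u]\bigr)H^{-1},
\]
so $[n,Hu]=1$ is equivalent to $nHn^{-1}[n,u]=H$, i.e.\ to $H^{\sigma n}[n,u]=H$, which by part (1) is exactly the condition $Hu\in\overline{N}_{\sigma n}$.

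No step here is a genuine obstacle; the only place requiring care is the verification that $\sigma$ acts trivially on $\mathcal{T}$ (and on $\mathcal{H}$), which is specific to the untwisted setting and to the choice of Chevalley/Tits representatives $n_r=n_r(1)$ and $h_r=h_r(-1)$. Once that is in place, parts (1) and (2) are immediate manipulations of the defining identity $x^{\sigma n}=n\sigma(x)n^{-1}$.
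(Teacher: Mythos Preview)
Your proof is correct and is exactly the kind of direct verification one expects here. The paper itself does not supply a proof of this lemma; it simply cites \cite[Lemma~3.1]{GS}, so there is no ``paper's own proof'' to compare against. Your argument---using that $\sigma$ fixes each $n_r=n_r(1)$ (hence all of $\mathcal{T}$, and in particular $\mathcal{H}$) in the untwisted case, then expanding $(Hu)^{\sigma n}$ and rearranging---is the natural route and matches how the lemma is applied later in the paper (e.g.\ the explicit checks $a^{\sigma n}=n a^\sigma n^{-1}=a$ in Sections~4 and~5 confirm the convention $x^{\sigma n}=n\,x^\sigma n^{-1}$ you adopted). The only point worth flagging is that the commutator convention $[n,u]=nun^{-1}u^{-1}$ should be stated, since the other common convention would give the inverse; with that made explicit, both parts follow as you wrote.
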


We need the following assertion for calculations in ${}^2G_2(q)$.

\begin{lemma}{\em \cite[Theorem~1.12.1(e)]{GS}}\label{coroots}
Let $\Delta=\{r_1,\ldots,r_l\}$ be a fundamental root system of a root system $\Phi$. For each $r\in\Phi$, we set $\check{r}=2r/(r,r)$, and write $\check{r}=\Sigma c_i\check{r_i}$. Then $h_r(t)=\Pi_{i=1}^lh_{r_i}(t^{c_i})$.
\end{lemma}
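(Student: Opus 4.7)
The plan is to exploit the character-theoretic description of the maximal torus $\overline T$. Since $\overline G$ is simply connected (the standing assumption of Section~2), the character lattice $X(\overline T)$ is freely generated by the fundamental weights $\omega_1,\ldots,\omega_l$, which are dual to the simple coroots via $\langle\omega_j,\check r_i\rangle=\delta_{ij}$. The key fact I would take from~\cite[\S 7.1]{CarSG} is that $h_r(t)$ is the image of $t$ under the cocharacter $\check r\colon\overline{\F}_p^*\to\overline T$, i.e.\ $\chi(h_r(t))=t^{\langle\chi,\check r\rangle}$ for every $\chi\in X(\overline T)$; in particular $t\mapsto h_r(t)$ is a homomorphism.

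Given this, the lemma reduces to a direct calculation on the generating set $\omega_1,\ldots,\omega_l$. Writing $\check r=\sum_i c_i\check r_i$, the left-hand side yields
\[\omega_j(h_r(t)) = t^{\langle\omega_j,\,\sum_i c_i\check r_i\rangle} = t^{c_j},\]
while on the right-hand side, using that each $h_{r_i}$ is itself a cocharacter,
\[\omega_j\!\left(\prod_{i=1}^l h_{r_i}(t^{c_i})\right) = \prod_{i=1}^l(t^{c_i})^{\langle\omega_j,\check r_i\rangle} = \prod_{i=1}^l(t^{c_i})^{\delta_{ij}} = t^{c_j}.\]
Since characters separate points of $\overline T$ and the $\omega_j$ generate $X(\overline T)$, the two torus elements coincide.

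The only step I expect to require care is the cocharacter interpretation of $h_r(t)$ for an arbitrary (not simple) root $r$; once that is in hand, the remainder is a routine bilinearity check. This interpretation is also where the simply-connected hypothesis on $\overline G$ is essential: in an adjoint or intermediate isogeny type, the dual basis to $\check r_1,\ldots,\check r_l$ would not lie inside $X(\overline T)$, and the clean product formula of the lemma could require modification. Under the blanket assumption of Section~2 no such subtlety arises.
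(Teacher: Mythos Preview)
The paper does not prove this lemma; it is quoted without proof as a known result (cited from~\cite{GS}, and ultimately a standard fact from the Chevalley--Steinberg theory as developed in~\cite{CarSG} or~\cite{GorLySol}). There is therefore no in-paper argument to compare against.

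Your proof via character--cocharacter duality is correct and is essentially the textbook argument. The identity $\chi(h_r(t))=t^{\langle\chi,\check r\rangle}$ for all $\chi\in X(\overline T)$ is precisely how $h_r(t)$ is characterized in~\cite[\S7.1]{CarSG}; combined with the fact that in the simply connected group the fundamental weights $\omega_1,\ldots,\omega_l$ freely generate $X(\overline T)$ and satisfy $\langle\omega_j,\check r_i\rangle=\delta_{ij}$, the equality of the two torus elements follows from your evaluation on each $\omega_j$. Your closing remark about the simply-connected hypothesis is also apt: the formula as stated does use that the $\omega_j$ lie in $X(\overline T)$, which is exactly the blanket assumption made in Section~2 of the paper.
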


\begin{lemma}{\em \cite[Lemma~2.5]{GS2}}\label{l:ww0}
Let $w\in W$ and $w_0\in Z(W)$. Consider the maximal tori $T$ and $T_0$ corresponding to $w$ and $ww_0$, respectively.
Then $T$ has a complement in its algebraic normalizer if and only if $T_0$ has a complement in its algebraic normalizer.
\end{lemma}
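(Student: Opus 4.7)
The plan is as follows. If $w_0=1$ the claim is trivial, so assume otherwise. For an irreducible root system the center $Z(W)$ is either trivial or generated by the longest element acting as $-1$ on the character lattice, so I may assume $w_0=-1$; in particular $\sigma$ fixes $w_0$. Choose a lift $n_0\in\mathcal{T}$ of $w_0$ as a product of the $n_r$ along a reduced expression. Using the structure formulas from~\cite{CarSG}, I would verify that $n_0^\sigma=n_0$, that $n_0^2\in\mathcal{H}$, and that $n_0$ acts on $\overline{T}$ by inversion.

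Via Remark~\ref{r:nonsplit} and the paragraph preceding it, the claim reduces to showing that $\overline{T}_{\sigma n}$ has a complement in $\overline{N}_{\sigma n}$ if and only if $\overline{T}_{\sigma nn_0}$ has a complement in $\overline{N}_{\sigma nn_0}$, where $n\in\overline{N}$ satisfies $\pi(n)=w$. Since $w_0$ is central in $W$ and $\sigma$-invariant, a direct check gives $C_{W,\sigma}(w)=C_{W,\sigma}(ww_0)$; hence by Proposition~\ref{p:normalizer} the two quotients $\overline{N}_{\sigma n}/\overline{T}_{\sigma n}$ and $\overline{N}_{\sigma nn_0}/\overline{T}_{\sigma nn_0}$ are isomorphic to the same abstract group, call it $C$.

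I would then build an explicit correspondence between complements. Given a complement $\{s(x):x\in C\}\subseteq\overline{N}_{\sigma n}$, for each $x\in C$ I would seek $t_x\in\overline{T}$ with $s(x)t_x\in\overline{N}_{\sigma nn_0}$: the defining condition $(nn_0)^{-1}(s(x)t_x)^\sigma(nn_0)=s(x)t_x$, combined with $s(x)\in\overline{N}_{\sigma n}$, reduces to a linear equation on $t_x$ that is solvable by surjectivity of the relevant Lang map on the connected group~$\overline{T}$. The main difficulty, which is the substance of the proof, is choosing the $t_x$ coherently so that $x\mapsto s(x)t_x$ is a group homomorphism: this amounts to showing that a certain $2$-cocycle with values in $\overline{T}_{\sigma nn_0}$ is a coboundary. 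I would expect this to follow from the centrality of $w_0$ (which forces $n_0$ to commute with each $s(x)$ modulo $\overline{T}$), the identity $n_0^\sigma=n_0$, and the fact that $\mathcal{H}$ has exponent at most $2$ when $q$ is odd. Swapping the roles of $n$ and $nn_0$ yields the inverse correspondence, completing the argument.
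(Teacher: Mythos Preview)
The paper does not prove this lemma; it is quoted from~\cite[Lemma~2.5]{GS2} and immediately followed by the remark that the authors do not actually invoke it, preferring to construct complements for $w$ and $ww_0$ by hand. So there is no argument here to compare your outline against.

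As for your proposal itself, there is a genuine gap at the decisive step. Everything up to the cocycle is fine: the endomorphism $t\mapsto t\,t^{\sigma n}$ of $\overline{T}$ is an isogeny (its differential $1+q\,w$ is invertible because $w$ has finite order), so one can solve $t_x\,t_x^{\sigma n}=h_x$ with ambiguity precisely $\overline{T}_{\sigma nn_0}$. But then $x\mapsto s(x)t_x$ is just a set-theoretic section of $\overline{N}_{\sigma nn_0}\twoheadrightarrow C$, and asking whether its $2$-cocycle $c(x,y)$ is a coboundary is \emph{exactly} the splitting problem you set out to prove. The only consequence of $s$ being a homomorphism is that $x\mapsto h_x=[n_0,s(x)]$ is a $1$-cocycle in $\overline{T}$; applying $t\mapsto t\,t^{\sigma n}$ to $c$ then merely reconfirms that $c$ takes values in $\overline{T}_{\sigma nn_0}$, giving no leverage on $H^2(C,\overline{T}_{\sigma nn_0})$. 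The three facts you list (centrality of $w_0$, $n_0^\sigma=n_0$, $\mathcal{H}$ of exponent $\le2$) were already spent in setting up the equation for $t_x$. What is missing is an honest link between the two extensions---for instance an involution of $\overline{G}$ acting by inversion on $\overline{T}$ and intertwining $\sigma n$ with $\sigma nn_0$, or (as the paper's explicit constructions for $G_2(q)$ and ${}^3D_4(q)$ suggest, where the \emph{same} subgroup of $\mathcal{T}$ complements both tori because $n_0\in Z(\mathcal{T})$) an argument that a complement may always be realized inside a subgroup centralized by $n_0$. Without such an ingredient the cocycle step is circular.
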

Notice that we can refer to Lemma~\ref{l:ww0} in the proof of Theorem~\ref{th}. Nevertheless, if a complement to a maximal torus exists, then we construct it directly and therefore consider both cases $w$ and $ww_0$.

\section{Proof of Theorem~\ref{th} for $G_2(q)$}

In this section, we suppose that $G=G_2(q)$, where $q$ is a power of a prime~$p$. Since the case $p=2$ follows from~\cite[Remark 2.5]{GS3}, we can assume that $p$ and $q$ are odd. The Dynkin diagram of type $G_2$ has the following form:

\begin{picture}(100,40)(-140,-10)
	\put(50,0){\line(1,0){50}}
	\put(50,3){\line(1,0){50}} \put(50,-3){\line(1,0){50}}
	\put(50,0){\line(1,0){50}} 
	\put(50,0){\circle*{6}} \put(100,0){\circle*{6}}
	\put(50,10){\makebox(0,0){$r_1$}}
	\put(100,10){\makebox(0,0){$r_2$}}
	\put(75,0){\makebox(0,0){$\langle$}}
\end{picture}\\

Following~\cite{Bour}, we use the following order of positive roots:
$$r_3=r_1+r_2, r_4=2r_1+r_2, r_5=3r_1+r_2, r_6=3r_1+2r_2.$$
The Weyl group $W=\langle w_1,w_2\rangle$ of $G_2(q)$ is isomorphic to the group $D_{12}$ and contains a central involution $w_0=w_1w_6=w_3w_5$. 
Since $\sigma$ acts trivially on $W$ in this case, the group $W$ contains exactly six $\sigma$-conjugacy classes and they coincide with the ordinary conjugacy classes.

To prove Theorem~\ref{th} we consider each $\sigma$-conjugacy class of maximal tori separately. As an element of $W$ corresponding to a conjugacy class of some maximal torus, we choose $w$ according to Table~\ref{tableG_2}. 
In each case, we present an element $n$ such that $\pi(n)=w$ and a complement to the torus $\overline{T}_{\sigma n}$ in $\overline{N}_{\sigma n}$.
The cyclic structure of maximal tori, given in Table~\ref{tableG_2}, can be found in~\cite[\S2]{Kantor}. The proof of the theorem uses calculations obtained with the aid of computer systems MAGMA~\cite{MAGMA,MC} and GAP~\cite{GAP}. The corresponding commands can be found in~\cite{github}. All calculations of this kind can be checked manually. For example, the case $w=1$ is considered in details in~\cite[\S7]{Galt1}.
Calculations in MAGMA show that $n_0:=h_1n_1n_6$ lies in $Z(\mathcal{T})$. In what follows, we use this fact without explanation.

\textbf{Tori 1 and 4.} In this case $w=1$ or $w=w_0$, respectively.
Moreover, we see that $C_{W}(w)\simeq\langle w_1,w_2 \rangle\simeq D_{12}$.

Let $n=1$ if $w=1$ and $n=n_0$ if $w=w_0$.
Consider elements $a=h_2n_1$ and $b=h_1n_2$. It follows from the definition of $n$ that $[n,a]=[n,b]=1$.
By Lemma~\ref{normalizer}(2), we get that $a,b\in\overline{N}_{\sigma n}$.
According to~\cite[\S7]{Galt1}, it is true that $a^2=b^2=1$ and $(ab)^6=1$. 
Therefore, we infer that $K=\langle a,b \rangle$ is a complement to $\overline{T}_{\sigma n}$ in $\overline{N}_{\sigma n}$.

\textbf{Tori 2 and 3.} In this case $w=w_2$ or $w=w_2w_0=w_4$, respectively.
Moreover, we have $C_W(w)=\langle w_2,w_4\rangle\simeq \mathbb{Z}_2\times \mathbb{Z}_2$.

Let $n=h_1n_2$ if $w=w_2$ and $n=h_1n_2n_0$ if $w=w_4$.
Consider elements  $a=h_1n_2$ and $b=h_1n_4$.
Using MAGMA, we see that $[n, a]=[n, b]=1$. By Lemma~\ref{normalizer}(2), we get that $a, b\in\overline{N}_{\sigma n}$. Now $a^2=b^2=[a,b]=1$ and hence $K=\langle a,b \rangle$ is a homomorphic image of $\mathbb{Z}_2\times \mathbb{Z}_2$. On the other hand, the image of $K$ in $W$ has order four, so $K\simeq\mathbb{Z}_2\times \mathbb{Z}_2$ and it is a complement to $\overline{T}_{\sigma n}$ in $\overline{N}_{\sigma n}$.

\textbf{Tori 5 and 6.} In this case $w=w_1w_3$ or $w=w_1w_3w_0=w_1w_5$, respectively. 
Moreover, we have $C_W(w)=\langle w_1w_3\rangle\times\langle w_0\rangle\simeq\mathbb{Z}_3\times \mathbb{Z}_2\simeq\mathbb{Z}_6$.

Let $n=n_1n_3$ if $w=w_1w_3$ and $n=n_1n_3n_0$ if $w=w_1w_5$.
Consider elements $a=n_1n_3$ and $n_0$.
Using MAGMA, we see that $[n, a]=[n, n_0]=1$. Therefore, $a, n_0\in\overline{N}_{\sigma n}$ by Lemma~\ref{normalizer}(2). Since $a^3=n_0^2=1$ and $[a,n_0]=1$, we infer that $K=\langle a,n_0\rangle$ is a complement to $\overline{T}_{\sigma n}$ in $\overline{N}_{\sigma n}$.

\begin{table}[H]
\begin{center}
\caption{Normalizers of maximal tori of $G_2(q)$\label{tableG_2}}
{\centering
\begin{tabular}{|l|l|c|l|l|l|}
    \hline
    &  $w$ & $|w|$ & $|C_W(w)|$ & $C_W(w)$ & Structure of $T$ \\ \hline
    1  & $1$ & 1 & 12 &  $D_{12}$ & $(q-1)^2$  \\
    2  & $w_2$ & 2 & 4 & $\mathbb{Z}_2\times \mathbb{Z}_2$ & $q^2-1$  \\
    3  & $w_4$ & 2 & 4 & $\mathbb{Z}_2\times \mathbb{Z}_2$ & $q^2-1$  \\
    4  & $w_1w_6$ & 2 & 12 & $D_{12}$ & $(q+1)^2$  \\
    5  & $w_1w_3$ & 3 & 6 & $\mathbb{Z}_6$ & $q^2+q+1$  \\
    6  & $w_1w_5$ & 6 & 6 & $\mathbb{Z}_6$ & $q^2-q+1$  \\
    \hline
\end{tabular}}
\end{center}
\end{table}

\section{Proof of Theorem~\ref{th} for groups ${}^2G_2(q)$}

In this section, we suppose that $G={}^2G_2(q)$, where $q=3^{2m+1}$ and $m$ is a positive integer. We follow the definition of the group ${}^2G_2(q)$ from~\cite[1.15.4, 2.2.3]{GorLySol}. In particular, $\rho$ is the nontrivial symmetry of the Dynkin diagram, $\sigma=\psi^{2m+1}$, where
\begin{center}
$\psi(x_r(t))=
\begin{cases} 
    x_{r^\rho}(t) & \text{if } r \text{ is a long root},\\
    x_{r^\rho}(t^3) & \text{if } r \text{ is a short root}.
\end{cases}$
\end{center}
As was mentioned above, the Weyl group $W=\langle w_1,w_2\rangle$ of $G_2(q)$ is isomorphic to $D_{12}$ and contains a central involution $w_0=w_1w_6=w_3w_5$. 
The $\sigma$-conjugacy classes of $W$ can be found directly by the definition. For example, the $\sigma$-conjugacy class for the identity element consists of the elements
$\{w^{-1}w^\sigma~|~w\in W\}$. Elements of the Weyl group are written in the following way: $W=\{w_k, (w_1w_2)^k~|~k=1,\ldots,6 \}$.
Since $(w_1w_2)^\sigma=w_2w_1$, we see that
$$(w_1w_2)^{-k}((w_1w_2)^k)^\sigma=(w_2w_1)^k(w_2w_1)^k=(w_2w_1)^{2k}=(w_1w_2)^{-2k},\quad w_1^{-1}w_1^\sigma=w_1w_2,$$ $$w_2^{-1}w_2^\sigma=w_2w_1=(w_1w_2)^5,\quad w_3^{-1}w_3^\sigma=w_3w_5=w_0=(w_1w_2)^3,\quad w_4^{-1}w_4^\sigma=w_4w_6=(w_1w_2)^5,$$  
$$w_5^{-1}w_5^\sigma=w_5w_3=w_0=(w_1w_2)^3,\quad w_6^{-1}w_6^\sigma=w_6w_4=w_1w_2.$$
Therefore, $\{w^{-1}w^\sigma~|~w\in W\}=\{(w_1w_2)^k~|~k=1,\ldots,6\}$. Similarly, we find the remaining $\sigma$-conjugacy classes: $\{w_1, w_2\}$, $\{w_3, w_5\}$, and $\{w_4, w_6\}$.

%Using the size of the $\sigma$-conjugacy class of $w$, we find the order of $C_{W,\sigma}(w)$ (see Table~\ref{table2G_2}).

For each representative of $\sigma$-conjugacy class, we find the structure of the corresponding maximal torus, and then construct a complement in the corresponding algebraic normalizer. The obtained results on the structure of maximal tori and their normalizers are given in Table~\ref{table2G_2}.

\textbf{Torus~1.} In this case $w=1$. 
Given an element $h_{r_1}(t_1)h_{r_2}(t_2)\in\overline{T}$, 
we find necessary and sufficient conditions ensuring that this element belongs to $\overline{T}_\sigma$.
According to~\cite[1.15.4(b)]{GorLySol}, we have $(h_r(t))^{\psi^2}=h_r(t^3)$. Hence,
$$h_{r_1}(t_1)h_{r_2}(t_2)=(h_{r_1}(t_1)h_{r_2}(t_2))^\sigma=(h_{r_1}(t_1)h_{r_2}(t_2))^{\psi^{2m}\psi}=
(h_{r_1}(t_1^{3^m})h_{r_2}(t_2^{3^m}))^\psi= h_{r_2}(t_1^{3^{m+1}})h_{r_1}(t_2^{3^m}).$$
Therefore, we find two necessary and sufficient equalities: $t_1=t_2^{3^m}$ and $t_2=t_1^{3^{m+1}}$.
Applying equivalent transformations, we get that
$$
\begin{cases}
t_1=t_2^{3^m} \\
t_2=t_1^{3^{m+1}}
\end{cases}\Leftrightarrow
\begin{cases}
t_1=t_2^{3^m} \\
t_2=t_2^{3^m\cdot 3^{m+1}}
\end{cases}\Leftrightarrow
\begin{cases}
t_1=t_2^{3^m} \\
t_2^{q-1}=1
\end{cases}.
$$
Thus, the torus is parametrized by a set $\{(z,z^{\sqrt{3q}})~|~z\in\overline{\F}_p, z^{q-1}=1 \}$ and is isomorphic to the cyclic group of order $q-1$.

Note that $C_{W,\sigma}(w)\simeq\langle w_0 \rangle\simeq\mathbb{Z}_2$. Consider the element $n=n_0=h_1n_1n_6$.
Using MAGMA, we see that $n^\sigma=n$ and $n^2=1$. Therefore, it follows from the definition of $\overline{N}_{\sigma n}$ that $n\in\overline{N}_{\sigma n}$. Hence, the group $K=\langle n \rangle$ is a complement to $\overline{T}_{\sigma n}$ in $\overline{N}_{\sigma n}$.

\textbf{Torus~2.} In this case $w=w_1$.
Using Lemma~\ref{coroots}, we get for an element
$h_{r_1}(t_1)h_{r_2}(t_2)\in\overline{T}$ that
$$\left(h_{r_1}(t_1)h_{r_2}(t_2)\right)^{\sigma n_1}=\left(h_{r_1}(t_2^{3^m})h_{r_2}(t_1^{3^{m+1}})\right)^{n_1}=
h_{-r_1}(t_2^{3^m})h_{r_1+r_2}(t_1^{3^{m+1}})=$$
$$h_{r_1}(t_2^{-3^m})h_{r_1}(t_1^{3^{m+1}})h_{r_2}(t_1^{3^{m+1}})=
h_{r_1}(t_1^{3^{m+1}}t_2^{-3^m})h_{r_2}(t_1^{3^{m+1}}).$$
Consider the following equivalent systems of equations for the parameters of $\overline{T}_{\sigma n_1}$:
$$
\begin{cases}
t_1=t_1^{3^{m+1}}t_2^{-3^m} \\
t_2=t_1^{3^{m+1}}
\end{cases}\Leftrightarrow
\begin{cases}
t_1=t_1^{3^{m+1}}t_1^{-3^{2m+1}} \\
t_2=t_1^{3^{m+1}}
\end{cases}\Leftrightarrow
\begin{cases}
t_1^{3^{2m+1}-3^{m+1}+1}=1 \\
t_2=t_1^{3^{m+1}}
\end{cases}
.
$$
Therefore, elements of the torus are parametrized by a set  $\{(z,z^{\sqrt{3q}})~|~z\in\overline{\F}_p, z^{q-\sqrt{3q}+1}=1\}$,
so $\overline{T}_{\sigma n_1}$ is isomorphic to the cyclic group of order $q-\sqrt{3q}+1$.

Note that $C_{W,\sigma}(w)\simeq\langle w_1w_2 \rangle\simeq\mathbb{Z}_6$.
Consider elements $a=n_1n_2$ and $n=n_1$. Clearly, $a^{\sigma n}=nn_2n_1n^{-1}=n_1n_2=a$, that is $a\in\overline{N}_{\sigma n}$.
As was noted in the case of Torus~6 for $G_2(q)$, it is true that $a^6=1$. Hence, $K=\langle a \rangle$ is a complement to $\overline{T}_{\sigma n}$ in $\overline{N}_{\sigma n}$.

\textbf{Torus~3.} In this case $w=w_3$. Using Lemma~\ref{coroots}, we obtain 
$$(h_{r_1}(t_1)h_{r_2}(t_2))^{\sigma n_3}=\left(h_{r_1}(t_2^{3^m})h_{r_2}(t_1^{3^{m+1}})\right)^{n_{r_1+r_2}}=
h_{2r_1+r_2}(t_2^{3^m})h_{-3r_1-2r_2}(t_1^{3^{m+1}})=$$
$$h_{r_1}(t_2^{2\cdot3^m})h_{r_2}(t_2^{3^{m+1}})h_{r_1}(t_1^{-3^{m+1}})h_{r_2}(t_1^{-2\cdot3^{m+1}})=
h_{r_1}(t_1^{-3^{m+1}}t_2^{2\cdot3^m})h_{r_2}(t_1^{-2\cdot3^{m+1}}t_2^{3^{m+1}}).$$
This implies that elements of $\overline{T}_{\sigma n_3}$ are given by the following two equalities on $t_1$ and $t_2$:
$$t_1=t_1^{-3^{m+1}}t_2^{2\cdot3^m},\quad t_2=t_1^{-2\cdot3^{m+1}}t_2^{3^{m+1}}.$$
Therefore, 
\begin{multline*}
\begin{cases}
t_1^{3^{m+1}+1}=t_2^{2\cdot 3^m} \\
t_1^{2\cdot 3^{m+1}}=t_2^{3^{m+1}-1}
\end{cases}\Leftrightarrow
\begin{cases}
t_1^{3^{m+1}+1}=t_2^{2\cdot 3^m} \\
t_1^{3^{m+1}-1}=t_2^{3^{m}-1}
\end{cases}
\Leftrightarrow
\begin{cases}
t_1^{2}=t_2^{3^m+1} \\
t_1^{3^{m+1}-1}=t_2^{3^{m}-1}
\end{cases}
\\ 
\Leftrightarrow
\begin{cases}
t_1^{2}=t_2^{3^m+1} \\
t_2^{\frac{3^{2m+1}+2\cdot3^m-1}{2}}=t_2^{3^{m}-1}
\end{cases}\Leftrightarrow
\begin{cases}
t_1^{2}=t_2^{3^m+1} \\
t_2^{\frac{3^{2m+1}+1}{2}}=1
\end{cases}.
\end{multline*}
Hence, $t_2^\frac{q+1}{2}=1$ and $t_1=\pm t_2^\frac{{\sqrt{q/3}+1}}{2}$.
It follows that  $\overline{T}_{\sigma n_3}$ has a cyclic structure $\mathbb{Z}_2\times\mathbb{Z}_{\frac{q+1}{2}}$,
where each element in the direct product can be written as follows: $(t_1,t_2)=(t_1\cdot t_2^\frac{-{\sqrt{q/3}-1}}{2},1)\cdot(t_2^\frac{{\sqrt{q/3}+1}}{2},t_2)$.

Note that $C_{W,\sigma}(w)\simeq\langle w_1w_2 \rangle\simeq\mathbb{Z}_6$.
Consider elements $a=n_1n_2$ and $n=h_2n_3$. Using MAGMA, we see that
$a^{\sigma n}=a.$ Hence $K=\langle a \rangle$ is a complement to $\overline{T}_{\sigma n}$ in $\overline{N}_{\sigma n}$.

\textbf{Torus~4.} In this case $w=w_4$.
Then
$$(h_{r_1}(t_1)h_{r_2}(t_2))^{\sigma n_4}=\left(h_{r_1}(t_2^{3^m})h_{r_2}(t_1^{3^{m+1}})\right)^{n_{2r_1+r_2}}=
h_{-r_1-r_2}(t_2^{3^m})h_{r_2}(t_1^{3^{m+1}})=$$
$$h_{r_1}(t_2^{-3^m})h_{r_2}(t_2^{-3^{m+1}})h_{r_2}(t_1^{3^{m+1}}).$$
We get the following equivalent systems of equations for the element parameters $t_1$, $t_2$ of $\overline{T}_{\sigma n_4}$:
$$\begin{cases}
t_1=t_2^{-3^m} \\
t_2=t_1^{3^{m+1}}t_2^{-3^{m+1}}
\end{cases}
\Leftrightarrow
\begin{cases}
t_1=t_2^{-3^m} \\
t_2=(t_2^{-3^m})^{3^{m+1}}t_2^{-3^{m+1}}
\end{cases}
\begin{cases}
t_1=t_2^{-3^m} \\
t_2^{3^{2m+1}+3^{m+1}+1}=1
\end{cases}.
$$
Therefore, the elements of the torus are parametrized by a set
$\{(z,z^{-\sqrt{3q}})~|~z\in\overline{\F}_p, z^{q+\sqrt{3q}+1}=1\}$, 
so the torus is isomorphic to the cyclic group of order $q+\sqrt{3q}+1$.

Note that $C_{W,\sigma}(w)\simeq\langle w_1w_2 \rangle\simeq\mathbb{Z}_6$.
Consider elements $a=n_1n_2$ and $n=h_2n_4$. Using MAGMA, we see that
$a^{\sigma n}=a.$ 
Therefore, $K=\langle a \rangle$ is a complement to $\overline{T}_{\sigma n}$ in $\overline{N}_{\sigma n}$.

\begin{table}[H]
\begin{center}
\caption{Normalizers of maximal tori of ${}^2G_2(q)$\label{table2G_2}}
{\centering
\begin{tabular}{|l|c|l|c|}
\hline
&  $w$ & Structure of $T$ & $C_{W,\sigma}(w)$ \\ \hline
1  & $1\sim(w_1w_2)^k$ &  $\{(z,z^{\sqrt{3q}})~|~z^{q-\sqrt{3q}+1}=1\}$ &  $\mathbb{Z}_2$  \\
   & & $T\simeq q-1$  &  \\
2  & $w_1\sim w_2$ & $\{(z,z^{\sqrt{3q}})~|~z^{q-1}=1\}$  & $\mathbb{Z}_6$ \\
   & & $T\simeq q-\sqrt{3q}+1$ &  \\
3  & $w_3\sim w_5$ & $\{(t_1,t_2)~|~t_1=\pm t_2^{(\sqrt{q/3}-1)/2}, t_2^{(q+1)/2}=1\}$ & $\mathbb{Z}_6$    \\
   & & $T\simeq \mathbb{Z}_2\times\frac{q+1}{2}$ & \\
4  & $w_4\sim w_6$ & $\{(z,z^{-\sqrt{3q}})~|~z^{q+\sqrt{3q}+1}=1\}$ & $\mathbb{Z}_6$  \\
   & & $T\simeq q+\sqrt{3q}+1$ &  \\
\hline
\end{tabular}}
\end{center}
\end{table}

\section{Proof of Theorem~\ref{th} for groups $^3D_4(q)$}
In this section, we suppose that $G={}^3D_4(q)$, where $q$ is a power of a prime $p$.
Since the case $p=2$ follows from~\cite[Remark 2.5]{GS3}, we assume that $q$ is odd. The extended Dynkin diagram of type $D_4$ has the following form:

\begin{picture}(250,70)(-170,-30) \put(0,20){\circle*{6}}
    \put(0,20){\line(5,-2){50}} \put(50,0){\circle*{6}}
    \put(100,20){\circle*{6}}
    \put(100,-20){\circle*{6}} \put(50,0){\line(5,2){50}}
    \put(50,0){\line(5,-2){50}} \put(0,30){\makebox(0,0){$r_1$}}
    \put(0,10){\makebox(0,0){1}} \put(50,10){\makebox(0,0){$r_2$}}
    \put(100,30){\makebox(0,0){$r_3$}}
    \put(100,10){\makebox(0,0){1}} \put(0,-20){\circle*{6}}
    \put(0,-20){\line(5,2){50}} \put(0,-10){\makebox(0,0){$-r_{12}$}}
    \put(0,-30){\makebox(0,0){-1}}
    \put(50,-10){\makebox(0,0){2}} \put(100,-10){\makebox(0,0){$r_4$}}
    \put(100,-30){\makebox(0,0){1}}
\end{picture}\\

We denote by $\rho$ the symmetry of the Dynkin diagram such that $\rho(r_1)=r_3$, $\rho(r_2)=r_2$, $\rho(r_3)=r_4$, and $\rho(r_4)= r_1$.
We will use the following numbering of roots:
\begin{multline*}
r_5=r_1+r_2, r_6=r_2+r_3, r_7=r_2+r_4, r_8=r_1+r_2+r_3, r_9=r_1+r_2+r_4, \\r_{10}=r_2+r_3+r_4, r_{11}=r_1+r_2+r_3+r_4, r_{12}=r_1+2r_2+r_3+r_4.
\end{multline*}
According to~\cite[Theorems 2.2.3, 1.15.2, 1.15.4]{GorLySol},  the endomorphism $\sigma$ acts on generators of $\overline{G}$ in the following way:
$(x_r(t))^\sigma=x_{r^\rho}(t^q)$. In particular, 
$$n_r^\sigma=n_{r^\rho} \text{ and } (h_r(t))^\sigma=h_{r^\rho}(t^q).$$
A structure of maximal tori of $^3D_4(q)$ was given in~\cite[Proposition~1.2]{DerM} and is presented below in Table~\ref{table3D_4}.

It is well known that the Weyl group $W=\langle w_1,w_2,w_3,w_4\rangle$ of $D_4(q)$ is isomorphic to a subgroup of index $2$ in the group $2^4\rtimes \Sym_4$ and
contains a central involution $w_0=w_1w_3w_4w_{12}$.
The group $W$ has seven $\sigma$-conjugacy classes. We choose representatives 
for these classes according to Table~\ref{table3D_4}.

Calculations in MAGMA show that $n_0:=n_1n_3n_4n_{12}$ lies in $Z(\mathcal{T})$. In what follows, we use this fact without explanation.

\begin{table}[H]
    \begin{center}
    \caption{Normalizers of maximal tori of $^3D_4(q)$\label{table3D_4}}
    {\centering
    \begin{tabular}{|l|l|c|l|l|}
    \hline
&  $w$ & $|w|$ & $C_{W,\sigma}(w)$ & Structure of $T$ \\ \hline
1  & $1$ & 1 &   $D_{12}$ & $\{(t_1,t_2,t_1^q,t_1^{q^2})~|~t_1^{q^3-1}=t_2^{q-1}=1\}$ \\
   & & & & $T\simeq(q^3-1)\times(q-1)$ \\  \hline
2  & $w_{12}$ & 2 &  $\mathbb{Z}_2\times \mathbb{Z}_2$ & $\{(t,t^{1-q^3},t^{q^4},t^{q^2})~|~t^{(q^3-1)(q+1)}=1\}$ \\
   & & & & $T\simeq(q^3-1)(q+1)$ \\ \hline
3  & $w_0w_{12}$ & 2 &  $\mathbb{Z}_2\times \mathbb{Z}_2$ & $\{(t,t^{q^3+1},t^{q^4},t^{q^2})~|~t^{(q^3+1)(q-1)}=1\}$ \\ 
   & & & & $T\simeq(q^3+1)(q-1)$ \\ \hline
4  & $w_{12}w_2$ & 3 &  $\operatorname{SL}_2(3)$ & $\{(t_1,t_2,t_1^qt_2,(t_1^{-1}t_2)^{q+1})~|~t_i^{q^2+q+1}=1\}$  \\
   & & & & $T\simeq(q^2+q+1)\times(q^2+q+1)$  \\  \hline
5  & $w_0w_{12}w_2$ & 3 &  $\operatorname{SL}_2(3)$ & $\{(t_1,t_2,t_1^{-q}t_2,(t_1t_2^{-1})^{q-1})~|~t_i^{q^2-q+1}=1\}$  \\
   & & & & $T\simeq(q^2-q+1)\times(q^2-q+1)$  \\ \hline
6  & $w_1w_2$ & 3 &  $\mathbb{Z}_4$ & $\{(t,t^{q^3+1},t^q,t^{q^2})~|~t^{q^4-q^2+1}=1\}$  \\
   & & & & $T\simeq q^4-q^2+1$  \\ \hline
7  & $w_0$ & 2 &  $D_{12}$ & $\{(t_1,t_2,t_1^{-q},t_1^{q^2})~|~t_1^{q^3+1}=t_2^{q+1}=1\}$  \\ 
   & & & & $T\simeq(q^3+1)\times(q+1)$  \\ \hline
\end{tabular}}
\end{center}
\end{table}

For each $\sigma$-conjugacy class of maximal tori, we present a complement in the corresponding algebraic normalizer.

\textbf{Tori 1 and 7.} In this case $w=1$ or $w=w_0$, respectively.
Moreover, we see that $C_{W,\sigma}(w)\simeq\langle w_2, w_1w_3w_4 \rangle\simeq D_{12}$.

Define $n=1$ if $w=1$ and $n=n_0$ if $w=w_0$.
Consider elements $a=h_1h_3h_4n_2$ and $b=h_2n_1n_3n_4$.
Since 
$$n_2^{\sigma  n}=n_2^n=n_2,\quad (n_1n_3n_4)^{\sigma n}=n_3n_4n_1^n=n_1n_3n_4,$$
we infer that $n_2, n_1n_3n_4\in\overline{N}_{\sigma n}$.
According to Table~\ref{table3D_4}, it is true that $h_2$, $h_1h_3h_4\in\overline{T}_{\sigma n}$ and hence $a,b\in\overline{N}_{\sigma n}$. Using MAGMA, we find that $a^2=b^2=1$ and $(ab)^6=1$. This implies that $K=\langle a,b \rangle$ is a complement to $\overline{T}_{\sigma n}$ in $\overline{N}_{\sigma n}$.

\textbf{Tori 2 and 3.} In this case $w=w_{12}$ or $w=w_0w_{12}$, respectively.
Moreover, we see that $C_{W,\sigma}(w)\simeq\langle w_0,w_{12} \rangle\simeq \mathbb{Z}_2\times \mathbb{Z}_2$.

Define $n=n_{12}$ and $\varepsilon=-$ if $w=w_{12}$; $n=n_0n_{12}$ and $\varepsilon=+$ if $w=w_0w_{12}$.

Consider elements $\alpha, \beta\in\overline{\mathbb{F}}_p$ such that $\alpha^{\frac{((\varepsilon{q})^3+1)(\varepsilon{q}-1)}{2}}=-1$ and $\beta=\alpha^{\frac{((\varepsilon{q})^3+1)}{2}}$. Define elements
$$H_1=(\alpha, \alpha^{(\varepsilon{q})^3+1}, \alpha^{q^4}, \alpha^{q^2}),\quad H_2=(\beta, \beta^{(\varepsilon{q})^3+1}, \beta^{q^4}, \beta^{q^2}).$$
According to~\cite[Table~1.1]{DerM}, we conclude that $H_1, H_2\in\overline{T}_{\sigma n}$.

Consider elements $a=H_1n_0$ and $b=H_2n_{12}$. Since
$$n_0^{\sigma n}=n_0^n=n_0,\quad n_{12}^{\sigma n}=n_{12}^n=1,$$
we infer that $n_0, n_{12}\in\overline{N}_{\sigma n}$.
It follows from~\cite[Lemma~3.2]{GS2} that
$$(Hn_{12})^2=(-\lambda_1^2\lambda_2^{-1}, 1, -\lambda_3^2\lambda_2^{-1}, -\lambda_4^2\lambda_2^{-1}).$$
Since $\beta^{\varepsilon{q}}=-\beta$, we find that $\beta=\beta^{q^2}=\beta^{q^4}=-\beta^{(\varepsilon{q})^3}$. Therefore,
$$b^2=(H_2n_{12})^2=(-\beta^{1-(\varepsilon{q})^3}, 1, -\beta^{2q^4-(\varepsilon{q})^3-1}, -\beta^{2q^2-(\varepsilon{q})^3-1})=(1, 1, 1, 1)=1.$$
By \cite[Lemma~3.3]{GS2}, we see that
$$[a, b]=1 \Leftrightarrow H_1^{-1}H_1^{n_{12}}=H_2^{-1}H_2^{n_0}=H_2^{-2}.$$
Since 
$$H^{n_{12}}=(\lambda_1\lambda_2^{-1}, \lambda_2^{-1}, \lambda_3\lambda_2^{-1},  \lambda_4\lambda_2^{-1}),$$
it is true that
$$H_1^{-1}H_1^{n_{12}}=(\alpha^{-((\varepsilon{q})^3+1)}, \alpha^{-2((\varepsilon{q})^3+1)}, \alpha^{-((\varepsilon{q})^3+1)},  \alpha^{-((\varepsilon{q})^3+1)})=(\beta^{-2}, \beta^{-4}, \beta^{-2}, \beta^{-2}).$$
On the other hand, $\beta^{(\varepsilon{q})^3+1}=-\beta^2$ and
$$H_2^{-2}=(\beta^{-2}, \beta^{-2((\varepsilon{q})^3+1)}, \beta^{-2q^4}, \beta^{-2q^2})=
(\beta^{-2}, (-\beta^2)^{-2}, \beta^{-2}, \beta^{-2}).$$
Finally, for every $H$ it is true that $(Hn_0)^2=n_0^2=1$.
Therefore, $K=\langle a, b \rangle\simeq \mathbb{Z}_2\times \mathbb{Z}_2$ is a complement to $\overline{T}_{\sigma n}$ in $\overline{N}_{\sigma n}$.

\textbf{Tori 4 and 5.} In this case $w=w_{12}w_2$ or $w=w_0w_{12}w_2$, respectively.
Moreover, it is true that $C_{W,\sigma}(w)\simeq\operatorname{SL}_2(3)$.
According to GAP, the group $\operatorname{SL}_2(3)$ has the following presentation:
$$\operatorname{SL}_2(3)\simeq\langle a,b~|~a^4, b^3, aba^{-1}bab, (b^{-1}a)^3\rangle.$$
Moreover, elements
$w_1w_7$ and $w_1w_2w_3w_7$ generate $C_{W,\sigma}(w)$
and satisfy this set of relations.

Define $n=n_{12}n_2$ if $w=w_{12}w_2$ and $n=n_0n_{12}n_2$ if $w=w_0w_{12}w_2$.

Consider elements $a=n_1n_2n_3n_7$ and $b=n_1n_7$. 
Since
$$a^{\sigma n}=(n_3n_2n_4n_5)^n=a,\quad b^{\sigma n}=(n_3n_5)^n=b,$$
we infer that $a, b\in\overline{N}_{\sigma n}$.
Using MAGMA, we see that $a^4=b^3=aba^{-1}bab=(b^{-1}a)^3=1$. Therefore,  $K=\langle a,b \rangle$ is a complement to $\overline{T}_{\sigma n}$ in $\overline{N}_{\sigma n}$.

\textbf{Torus~6.} In this case $w=w_1w_2$ and $C_{W,\sigma}(w)\simeq\langle w_1w_2w_3w_7 \rangle\simeq \mathbb{Z}_4$.

Consider $n=n_1n_2$ and $a=n_1n_2n_3n_7$. Since
$a^{\sigma n}=(n_3n_2n_4n_5)^n=a,$
we infer that $a\in\overline{N}_{\sigma n}$.

Using MAGMA, we find that $a^4=1$ and hence
$K=\langle a\rangle$ is a complement to $\overline{T}_{\sigma n}$ in $\overline{N}_{\sigma n}$.

\section{Proof of Theorem~\ref{th_Omega}}

In this section, we suppose that $G=\mathrm{P}\Omega_{2n}^-(q)$, where $n\geqslant4$ and $q$ is a power of a prime $p$.
Notice that~\cite[Remark 2.5]{GS3} is true for all groups of Lie type and therefore a maximal torus $T$ has a complement in its algebraic normalizer $N(G,T)$ in the case of even characteristic. In what follows, we assume that $q$ is odd.

Recall some definitions and the notation concerning orthogonal groups from~\cite{ButGre}. 
The group $\GO_{2n+1}(\overline{\F}_p, Q)$ is an orthogonal group of dimension $2n+1$ over $\overline{\F}_p$ associated with a non-singular quadratic form  $Q$, where $Q(v)=x_0^2+x_1x_{-1}+\ldots+x_nx_{-n}$. We fix a basis $\{x,e_1,\ldots,e_n,f_1\ldots,f_n\}$ corresponding to $Q$ in the vector space $V$. We numerate rows and columns of matrices in $\GO_{2n+1}(\overline{\F}_p)$ in the order $0,1,2,\ldots,n,-1,-2,\ldots,-n$. Define $\overline{G}$ as a subgroup of $\overline{H}=\SO_{2n+1}(\overline{\F}_p)$ consisting of all matrices of the form $\bd(1, A)$, where $A$ is a matrix of size $2n\times 2n$. Then $\overline{G}\simeq\SO_{2n}(\overline{\F}_p)$.

A subgroup $\overline{T}$ of $\overline{H}$ consisting of all diagonal matrices of the form $\bd(1,D,D^{-1})$ is a maximal torus of groups $\overline{H}$ and $\overline{G}$.
The group $N_{\overline{H}}(\overline{T})$ is a subgroup of the monomial matrix group. There exists an embedding of a Weyl group $W_{\overline{H}}=N_{\overline{H}}(\overline{T})/\overline{T}$ into a permutation group on the set $\{1,2,\ldots,n,-1,-2,\ldots,-n\}$. The image of $W$ under this embedding coincides with the group $\Sl_n$ of all permutations $\varphi$ such that $\varphi(-i)=-\varphi(i)$. 

If we drop signs from elements of the set $\{1,2,\ldots,n,-1,-2,\ldots,-n\}$, we obtain a homomorphism from $\Sl_n$ onto $\Sym_n$. Let $\varphi\in\Sl_n$ is mapped into a cycle $(i_1i_2\ldots i_k)$ and fixes all elements, except $\pm i_1,\pm i_2,\ldots,\pm i_k$. If $\varphi(i_k)=i_1$, then $\varphi$ is called a positive cycle of length $k$; if $\varphi(i_k)=-i_1$, then $\varphi$ is called a negative cycle of length $k$. The image of an arbitrary element  $\varphi$ of $\Sl_n$ is uniquely expressed as a product of disjoint cycles. According to this decomposition, $\varphi$ is uniquely expressed as a product of disjoint positive and negative cycles. Lengths of the cycles together with their signs give a set of integers, called the {\it cycle type} of $\varphi$.

The Weyl group $W_{\overline{G}}=N_{\overline{G}}(\overline{T})/\overline{T}$ is isomorphic to a subgroup $\Sl_n^+$ of $\Sl_n$ consisting of all permutations whose decomposition into disjoint cycles has an even number of negative cycles.

Let $n_0=\bd(-1,A_0)$, where $A_0$ is the permutation matrix corresponding to the negative cycle $(n,-n)$. Then $n_0\in N_{\overline{H}}(\overline{T})$ and $W_{\overline{H}}=W_{\overline{G}}\cup w_0W_{\overline{G}}$, where $w_0=\pi(n_0)$.

Let $\sigma$ be a Steinberg endomorphism of $\overline{H}$, acting by the rule $(a_{ij})\mapsto(a_{ij}^q)$, and $\sigma_1=\sigma\circ n_0$. Then $G=\overline{G}_{\sigma_1}\simeq\SO_{2n}^-(q)$ and $O^{p'}(G)\simeq\Omega_{2n}^-(q)$. According to~\cite[\S4]{ButGre}, two elements $w_1$ and $w_2$ are $\sigma_1$-conjugate in $W_{\overline{G}}$ if and only if $w_0w_1$ and $w_0w_2$ are conjugate by an element of $W_{\overline{G}}$. In particular, 
$$C_{W_{\overline{G}},\sigma_1}(w)=C_{W_{\overline{G}},\sigma}(w_0w).$$
We say that a maximal torus $(\overline{T}^g)_{\sigma_1}$ of $G$ with $\pi(g^\sigma g^{-1})=w$ corresponds to the element $w_0w$.

For brevity, we assume $W=W_{\overline{G}}$ and $\overline{N}=N_{\overline{G}}(\overline{T})$. 
According to Proposition~\ref{prop2.5}, we get that
$$({\overline{T}}^g)_{\sigma_1}=(\overline{T}_{\sigma_1 n})^g=(\overline{T}_{\sigma n_0n})^g, \quad (N_{\overline{G}}({\overline{T}}^g))_{\sigma_1}=(\overline{N}^g)_{\sigma_1}=(\overline{N}_{\sigma n_0n})^g.$$ 
Hence,
$$C_{W,\sigma}(w_0w)= C_{W,\sigma_1}(w)\simeq  (N_{\overline{G}}({\overline{T}}^g))_{\sigma_1}/({\overline{T}}^g)_{\sigma_1}= (\overline{N}_{\sigma n_0n})^g/(\overline{T}_{\sigma n_0n})^g\simeq\overline{N}_{\sigma n_0n}/\overline{T}_{\sigma n_0n}.$$
Maximal tori of $\SO_{2n}^-(q)$ have the following description.
\begin{prop}{{\rm \cite{ButGre}}}\label{prop2}
Let $(\overline{T}^g)_{\sigma_1}$ be a maximal torus of $\overline{G}_{\sigma_1}=\SO_{2n}^-(q)$ corresponding to an element $w_0w$ of the Weyl group with the cycle type $(\overline{n_1})\ldots(\overline{n_k})(n_{k+1})\ldots(n_m)$, where $k$ is odd. Put $\varepsilon_i=-$ if $i\leqslant k$ and $\varepsilon_i=+$ otherwise. Then $\overline{T}_{\sigma_1 n}$ is a subgroup of $\overline{G}$ consisting of all
diagonal matrices of the form
$$\bd(1, D_1, D_2,\ldots,D_m, D_1^{-1}, D_2^{-1},\ldots, D_m^{-1}),$$ 
where $D_i=\diag(\lambda_i, \lambda_i^q,\ldots, \lambda_i^{q^{n_i-1}})$ and $\lambda_i^{q^{n_i}-\varepsilon_i1}=1$. 
\end{prop}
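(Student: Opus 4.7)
The plan is to compute $\overline{T}_{\sigma_1 n}$ by unpacking the fixed-point equation on the diagonal torus. Since $\overline{T}$ consists of matrices $\bd(1,\diag(d_1,\ldots,d_n),\diag(d_1^{-1},\ldots,d_n^{-1}))$, an element $t\in\overline{T}$ belongs to $\overline{T}_{\sigma_1 n}$ precisely when $\sigma(t)=(n_0 n)\,t\,(n_0 n)^{-1}$, where the right-hand side is conjugation by the fixed lift $n_0 n\in\overline{N}$ of $w_0 w$. Under this conjugation the entries $d_1,\ldots,d_n,d_1^{-1},\ldots,d_n^{-1}$ are permuted (with occasional inversions) according to the signed permutation $w_0 w\in\Sl_n$, so the first step is to write this signed permutation in its cycle decomposition. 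The hypothesis that $k$ is odd is forced: $w\in W_{\overline{G}}\simeq\Sl_n^+$ has an even number of negative cycles, and the extra negative cycle $(n,-n)$ coming from $n_0$ gives $w_0 w$ an odd number of negative cycles in total.

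I would then exploit the fact that the fixed-point condition decouples along the cycles of $w_0 w$. For a cycle of length $n_i$ with support $\{i_1,\ldots,i_{n_i}\}$, iterating $d_{i_{s+1}}=d_{i_s}^{q}$ along the cycle yields $d_{i_s}=\lambda_i^{q^{s-1}}$ with $\lambda_i:=d_{i_1}$. Closing the cycle up gives $\lambda_i=\lambda_i^{q^{n_i}}$ for a positive cycle, so $\lambda_i^{q^{n_i}-1}=1$, and $\lambda_i=\lambda_i^{-q^{n_i}}$ for a negative cycle, so $\lambda_i^{q^{n_i}+1}=1$; together these are precisely $\lambda_i^{q^{n_i}-\varepsilon_i 1}=1$ with $\varepsilon_i$ as in the statement. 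Assembling the independent constraints across all $m$ cycles produces the block form $\bd(1,D_1,\ldots,D_m,D_1^{-1},\ldots,D_m^{-1})$ with $D_i=\diag(\lambda_i,\lambda_i^q,\ldots,\lambda_i^{q^{n_i-1}})$, which is the claimed description.

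The point requiring some care is the sign bookkeeping: I must verify that the global $-1$ from the factor $\bd(-1,A_0)$ of $n_0$ is exactly accounted for by the sign of the distinguished negative cycle $(n,-n)$ that $n_0$ realizes, and that the ambiguity in choosing the lift $n$ of $w$ is immaterial because any two lifts differ by a torus element whose effect is absorbed into the free parameters $\lambda_i$. I do not expect any genuine obstacle beyond this; the argument reduces to the elementary cycle-wise computation described above, in line with the treatment in~\cite[\S4]{ButGre}.
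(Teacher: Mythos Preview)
The paper does not give its own proof of this proposition; it is quoted verbatim from \cite{ButGre} and used as a black box. Your proposal is the standard derivation and is correct: the fixed-point equation $t^{\sigma n_0 n}=t$ on the diagonal torus $\overline{T}$ reduces, via the signed permutation action of $w_0w$, to independent cycle-by-cycle recursions $d_{i_{s+1}}=d_{i_s}^{\,q}$ (with an inversion at the closure of a negative cycle), yielding exactly the conditions $\lambda_i^{q^{n_i}-\varepsilon_i 1}=1$. This is precisely the computation carried out in \cite[\S4]{ButGre}, so your approach agrees with the original source even though the present paper omits the argument.
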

When $k$ is even, the subgroup $T$ from Proposition~\ref{prop2} is isomorphic to a maximal torus of $\overline{G}_{\sigma}=\SO_{2n}^+(q)$ and the results for this case were obtained in~\cite{Galt2}. In order to prove Theorem~\ref{th_Omega} we will refer to the mentioned paper.

First, we note that there is an error in~\cite[Proposition~5]{Galt2}. Since the quadratic form $Q$ was already fixed, we have $Q(x)=1$ (instead of $Q(x)=-1$ as in~\cite[Proposition~5]{Galt2}) and the statement should be formulated as follows.

\begin{prop}\label{Omega} 
Let $g\in\SO_{2n+1}(q)$ such that
$$g(e_j)=\alpha f_j,\; g(f_j)=\alpha^{-1}e_j,\; g(x)=-x,\; g(e_i)=e_i,\; g(f_i)=f_i,$$
for $1\leqslant i \leqslant n,\;i\neq j, \alpha\in\F_q^*$. Then $g\in\Omega_{2n+1}(q)$ if and only if $(-\alpha)\in(\F_q^*)^2$.
\end{prop}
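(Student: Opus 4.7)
The plan is to express $g$ as a product of two hyperplane reflections on $V$ and then invoke the standard criterion that an element of $\SO_{2n+1}(q)$ lies in $\Omega_{2n+1}(q)$ if and only if its spinor norm is trivial, i.e.\ a square in $\F_q^*/(\F_q^*)^2$. Recall that for an anisotropic vector $v$, the reflection $r_v(u)=u-\tfrac{B(u,v)}{Q(v)}v$ has spinor norm $Q(v)\bmod(\F_q^*)^2$, and that spinor norms multiply under composition. Since $g$ pointwise fixes the non-degenerate $(2n-2)$-dimensional subspace $\langle e_i,f_i\mid i\neq j\rangle$, the natural place to search for reflecting vectors is its orthogonal complement, the $3$-dimensional non-degenerate subspace $U=\langle x,e_j,f_j\rangle$ on which $g$ acts with determinant $1$.

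First I would propose the candidate factorisation $g=r_x\circ r_{v}$ with $v=e_j-\alpha f_j$. Both vectors are anisotropic: $Q(x)=1$ and $Q(v)=-\alpha\neq 0$. Next I would check the action on the basis. Each reflection fixes $e_i,f_i$ for $i\neq j$ because these are orthogonal to both $x$ and $v$. A direct computation using $B(e_j,v)=-\alpha$ and $B(f_j,v)=1$ gives $r_v(e_j)=\alpha f_j$, $r_v(f_j)=\alpha^{-1}e_j$, $r_v(x)=x$, while $r_x(x)=-x$ and $r_x$ fixes $e_j,f_j$. Composing yields precisely the action prescribed for $g$ in the statement, so the factorisation is correct.

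The final step is immediate: the spinor norm of $g$ is
\[
Q(x)\cdot Q(v)=1\cdot(-\alpha)=-\alpha\pmod{(\F_q^*)^2},
\]
so $g\in\Omega_{2n+1}(q)$ if and only if $-\alpha$ is a nonzero square in $\F_q$, which is the asserted condition. There is no substantive obstacle here; the only delicate points are choosing $v$ so that $r_v$ realises the ``swap with scaling'' on $\langle e_j,f_j\rangle$ (any other scaling of $v$ only changes $Q(v)$ by a square, so the class of the spinor norm is unaffected) and remembering that since $g$ is written as a product of an even number of reflections, the well-known factor-of-$2$ ambiguity in conventions for the spinor norm disappears modulo $(\F_q^*)^2$.
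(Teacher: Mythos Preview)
Your proof is correct and follows precisely the intended route: factor $g$ as a product of two reflections $r_x$ and $r_{e_j-\alpha f_j}$, then read off the spinor norm as $Q(x)\cdot Q(e_j-\alpha f_j)=1\cdot(-\alpha)$ modulo squares. This is exactly the computation underlying the paper's statement (which is phrased as a correction to \cite[Proposition~5]{Galt2}); the whole point of the correction is that with the fixed form $Q(v)=x_0^2+\sum x_ix_{-i}$ one has $Q(x)=1$ rather than $-1$, and your calculation uses this correctly.
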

Because of this, some proofs of Lemmas in~\cite{Galt2} must be corrected, but the results, stated in the theorems~2~and~3 of~\cite{Galt2}, remain valid. 

Recall the notation used in~\cite{Galt2} to describe $C_W(w)$.

\noindent $\omega_1=(1,2,\ldots,n_1)(-1,-2,\ldots,-n_1)$;\\
$\varpi_1=(1,2,\ldots,n_1,-1,-2,\ldots,-n_1)$;\\
$\tau_1=(1,-1)(2,-2)\ldots(n_1,-n_1)$;\\
$\chi_1=(1,n_1+1)(2,n_1+2)\ldots(n_1,2n_1)(-1,-(n_1+1))(-2,-(n_1+2))\ldots(-n_1,-2n_1)$.

Let $t=n_1+\ldots+n_i$. Similarly, we define the elements\\
$\omega_{i+1}=(t+1,\ldots,t+n_{i+1})(-(t+1),\ldots,-(t+n_{i+1}))$;\\
$\varpi_{i+1}=(t+1,\ldots,t+n_{i+1},-(t+1),\ldots,-(t+n_{i+1}))$;\\
$\tau_{i+1}=(t+1,-(t+1))\ldots(t+n_{i+1},-(t+n_{i+1}))$;\\
$\chi_{i+1}=(t+1,t+n_{i+1}+1)\ldots(t+n_{i+1},t+2n_{i+1})
(-(t+1),-(t+n_{i+1}+1))\ldots(-(t+n_{i+1}),-(t+2n_{i+1}))$.\\

(a) If an element $w$ has the cycle type $(\overline{n_1})\ldots(\overline{n_k})$ and $n_1=n_2=\ldots=n_k$, then
$$C_W(w)=\langle\varpi_i,\chi_j|1\leqslant i\leqslant k, 1\leqslant j\leqslant k-1\rangle,\quad C_W(w)\simeq\mathbb{Z}_{2n_{1}}\wr\Sym_{k}.$$

(b) If an element $w$ has the cycle type $({n_1})\ldots({n_k})$ and $n_1=n_2=\ldots=n_k$, then
$$C_W(w)=\langle\omega_i,\tau_i,\chi_j|1\leqslant i\leqslant k, 1\leqslant j\leqslant k-1\rangle,\quad C_W(w)\simeq(\mathbb{Z}_{n_1}\times\mathbb{Z}_{2})\wr\Sym_{k}.$$
In the general case, the group $C_W(w)$ is isomorphic to the direct product of the groups from (a) and (b).

Now we give the required corrections for the work~\cite{Galt2}.
According to Proposition~\ref{Omega}, the spinor norm $\theta$ of the element $$\tau=(1,-1)(2,-2)\ldots(k,-k)$$ 
is equal to $\theta(\tau)=(-1)^k$.

In the proof of Lemma~13 from~\cite{Galt2} we should define the elements $u_i$ as follows
\[{u}_i=\begin{cases} \bd((-1)^{n_i},I_n,I_n)\tau_i & \text{if } n_i \text{ is even} \\
	v_0\bd((-1)^{n_i},I_n,I_n)\tau_i & \text{if } n_i \text{ is odd}
\end{cases}.
\]
Then the spinor norm $\theta(u_i)=1$ for all $n_i$. All the rest in the proof of~\cite[Lemma~13]{Galt2} remains the same.

In the proof of Lemma~18 from~\cite{Galt2} we should define the elements $u_i$ as follows
$$u_1=\bd(1,T_1,T_1^{-1})\tau_1, u_2=\bd(1,T_2,T_2^{-1})\tau_2, u_3=\bd(1,T_3,T_3^{-1})\tau_3, u_4=\bd(1,T_4,T_4^{-1})\tau_4,$$
where $$T_1=\bd(-I_1,I_1,I_3,I_3), T_2=\bd(I_1,-I_1,I_3,I_3), T_3=\bd(I_1,I_1,-I_3,I_3), T_4=\bd(I_1,I_1,I_3,-I_3),$$
and $I_j$ is the identity $n_j\times n_j$ matrix. Then the spinor norm $\theta(u_i)=\det(T_i)(-1)^{n_i}=1$ for all $i$. All the rest in the proof of~\cite[Lemma~18]{Galt2} remains the same.

In the proof of Lemma~23 from~\cite{Galt2} we should define the elements $t_1, t_2$ as follows
$$t_1=T_1\varpi_1, t_2=T_2\varpi_2, \text{ where} \quad T_1=T_2=\bd(-1,-I_1, I_1, -I_1, I_1).$$
Then the spinor norm $\theta(t_i)=\det(-I_1)\theta(\varpi_i)=(-1)^{n_i}(-1)^{n_i}=1$ for $i=1,2$. All the rest in the proof of~\cite[Lemma~23]{Galt2} remains the same.

\vspace{0.5em} 
Now we proceed to the proof of Theorem~\ref{th_Omega}. 

Lemmas~15, 16, and 19 of~\cite{Galt2} proved for even $k$ are also valid for odd $k$ because the parity is not used in the proofs. Thus, Lemmas~15, 16, and 19 imply the theorem for $m\geqslant5$, as well as for $q\equiv1\!\pmod4$.

Let $m=4$. In this case an element of the Weyl group has one of two cycle types: 
$$(\overline{n_1})(n_2)(n_3)(n_4) \text{ or } (\overline{n_1})(\overline{n_2})(\overline{n_3})(n_4).$$
If all $n_i$ are even, then the result follows from~\cite[Lemma~19]{Galt2}. If the number of odd numbers among $n_1,n_2,n_3,n_4$ is not equal to two, then the result follows from~\cite[Lemma~15(2)]{Galt2}. In the case when there are two different odd numbers among $n_1,n_2,n_3,n_4$, the result also follows from~\cite[Lemma~15(2)]{Galt2}.

\begin{lemma}\label{m=4} 
Let $T$ be a maximal torus of $G=\Omega_{2n}^-(q)$ corresponding to an element $w_0w$ of the Weyl group with the cycle type $(\overline{n_1})(n_2)(n_3)(n_4)$ or $(\overline{n_1})(\overline{n_2})(\overline{n_3})(n_4)$. Let $\widetilde{T}$ and $\widetilde{N}$ be the images of $T$ and $N(G,T)$ in $\widetilde{G}=\mathrm{P}\Omega_{2n}^-(q)$, respectively. If $q\equiv3\pmod4$, $n_1,n_4$ are even, $n_2=n_3$ is odd, then $\widetilde{T}$ does not have a complement in~$\widetilde{N}$.
\end{lemma}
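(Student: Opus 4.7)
My plan is to argue by contradiction. Suppose $\widetilde{K}$ is a complement to $\widetilde{T}$ in $\widetilde{N}$, and let $K$ be its full preimage in $N(G,T)$. Writing $Z=Z(G)\cap T$, we have $Z\leqslant K$, $K\cap T=Z$, and $KT=N(G,T)$. Thus the complement in $\widetilde{N}$ produces a subgroup $K$ of $N(G,T)$ whose intersection with $T$ is the scalar centre, which is equivalent to the splitting of the short exact sequence
$$1\to T/Z\to N(G,T)/Z\to C_{W,\sigma_1}(w)\to 1.$$
My task is to show that this extension does not split under the stated hypotheses, which is strictly stronger than the non-splitting of the corresponding extension for $\Omega_{2n}^{-}(q)$ proved in~\cite{Galt2}.

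Using the description of $C_W(w_0 w)$ recalled before Lemma~\ref{m=4} (a direct product of wreath products indexed by blocks of equal cycle lengths), the hypothesis $n_2=n_3$ forces the centralizer to contain an element $\chi$ that swaps the two cycles of length $n_2$, while acting trivially on the cycles corresponding to $n_1$ and $n_4$. I single out $\chi$ as the carrier of the obstruction and fix an explicit monomial lift $\dot\chi\in\overline{N}$, analogous to the lifts $\chi_j$ constructed in~\cite{Galt2}. The splitting condition forces the existence of some $h\in T$ with
$$(h\dot\chi)^2=h\cdot h^{\dot\chi^{-1}}\cdot\dot\chi^2\in Z,$$
so it suffices to prove that $\dot\chi^2$ does not lie in the subset $Z\cdot\{h\cdot h^{\dot\chi^{-1}}:h\in T\}$ of~$T$.

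The computation proceeds in two stages. First, using the explicit form of $T$ provided by Proposition~\ref{prop2}, the image of the relative-norm map $h\mapsto h\cdot h^{\dot\chi^{-1}}$ is identified as the ``diagonal'' subgroup of the product of the $n_2$- and $n_3$-blocks. Second, a direct block computation of $\dot\chi^2$, combined with the corrected spinor-norm formula of Proposition~\ref{Omega}, shows that $\dot\chi^2$ has spinor norm $(-1)^{n_2}$ on each of the two swapped odd-dimensional blocks. Since $n_2$ is odd, $q\equiv 3\pmod 4$, and $-1$ is therefore a non-square in $\F_q^{\ast}$, the class of $\dot\chi^2$ in the quotient $T/\bigl(Z\cdot\{h\cdot h^{\dot\chi^{-1}}:h\in T\}\bigr)$ is non-trivial, which is the desired contradiction. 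Both cycle types $(\overline{n_1})(n_2)(n_3)(n_4)$ and $(\overline{n_1})(\overline{n_2})(\overline{n_3})(n_4)$ are treated uniformly, since changing the sign of a cycle not involved in~$\chi$ does not influence the parity obstruction. The main obstacle is the spinor-norm bookkeeping for $\dot\chi$ and for the torus adjustments $h\in T$ permitted by Proposition~\ref{prop2}: one must verify that no choice of $h$ can cancel the non-trivial spinor-norm contribution of the swap of two odd-dimensional blocks, and for this the corrections to Lemmas~13,~18, and~23 of~\cite{Galt2} collected earlier in this section are decisive.
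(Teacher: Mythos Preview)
Your reduction throws away exactly the constraint that carries the obstruction. You correctly note that a complement produces some $v_2=h\dot\chi\in N(G,T)$ with $v_2^2\in Z$, but you then test only whether $\dot\chi^2$ lies in $Z\cdot\{h\,h^{\dot\chi^{-1}}:h\in T\}$. With the permutation-matrix lift (the one used in~\cite{Galt2} and in this section) one has $\dot\chi^2=I$, which lies in that set for $h=I$; so the statement you set out to prove is simply false, and the argument cannot close. The sentence ``$\dot\chi^2$ has spinor norm $(-1)^{n_2}$'' is a symptom of the same confusion: it is $v_2=h\dot\chi$, not $\dot\chi^2$, whose spinor norm is the obstruction, and that constraint comes from the requirement $v_2\in\Omega_{2n}^-(q)$, which your reformulation never invokes.

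The paper's proof keeps this spinor-norm constraint in play and, crucially, uses more of $C_W(w_0w)$ than the single element $\chi_2$. Because $n_1$ and $n_4$ are even, the involutions $\tau_1,\tau_4$ lie in $C_W(w_0w)$; choosing preimages $u_1,u_4\in H$ and imposing $[v_2,u_i]\in Z$ forces $\lambda_1^2=\lambda_4^2=1$ via~\cite[Lemma~4]{Galt2}. Combined with $v_2^2=\varepsilon I$ and~\cite[Lemma~8]{Galt2} this pins down $\varepsilon=1$, and only then does the spinor-norm computation of $v_2$ give $(-1)^{n_2}=-1$, contradicting $v_2\in\Omega_{2n}^-(q)$. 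If you want to salvage a one-element argument, you must separately rule out $(h\dot\chi)^2=-I$ (for instance by checking that the torus in the $n_1$-block admits no fourth root of unity when $q\equiv3\pmod4$ and $n_1$ is even) and then compute $\theta(h\dot\chi)$ rather than $\theta(\dot\chi^2)$; but as written your proposal does neither.
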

\begin{proof}
Assume on the contrary that $\widetilde{T}$ has a complement $\widetilde{H}$ in $\widetilde{N}$.
Let $H$ be a preimage of $\widetilde{H}$ in $N(G,T)$.
Since $n_2=n_3$, we have $\chi_2\in C_W(w_0w)$. Since $n_1$ and $n_4$ 
are even, we get that $\tau_1,\tau_4\in C_W(w_0w)$.
Let $v_2,u_1,u_4$ be preimages of $\chi_2,\tau_1,\tau_4$ in $H$. Then the element $v_2$ has the form
$$v_2=\bd(D_1,D_2,D_3,D_4,D_1^{-1},D_2^{-1},D_3^{-1},D_4^{-1})\chi_2,$$ where
$D_i=\diag(\lambda_i, \lambda_i^q,\ldots, \lambda_i^{q^{n_i-1}})$.
Since $\widetilde{H}$ is a complement to $\widetilde{T}$, the following equalities must be hold
$$v_2^{2}=\varepsilon I,\quad v_2u_1=\varepsilon_1 u_1v_2,\quad v_2u_4=\varepsilon_4 u_4v_2,  \text{ where } \varepsilon,\varepsilon_1,\varepsilon_4\in\{1,-1\}.$$
Since $m=4$, we find that $\varepsilon_1=\varepsilon_4=1$ and hence $\lambda_1^2=\lambda_4^2=1$ by~\cite[Lemma~4]{Galt2}.
Applying~\cite[Lemma~8]{Galt2} to the equality $v_2^{2}=\varepsilon I$, we get that $D_2D_3=\varepsilon I_1$, $\lambda_1^2=\lambda_4^2=\varepsilon$. Hence, $\varepsilon=1$ and
$$\det(v_2|_{V_0})=\det(D_1D_2D_3D_4)\det(\chi_2|_{V_0})=\det(D_2D_3)\lambda_1^{n_1}\lambda_4^{n_4}(-1)^{n_2}= -1.$$ Since $q\equiv3\pmod4$, we have $\det(v_2|_{V_0})\notin(\F_q^*)^2$ and $v_2\notin\Omega_{2n}^-(q)$; a contradiction.
\end{proof}

\noindent 
Let $m=3$. In this case an element of the Weyl group has one of two cycle types:  
$$(n_1)(n_2)(\overline{n_3}) \text{ or } (\overline{n_1})(\overline{n_2})(\overline{n_3}).$$
According to~\cite[Lemma~19]{Galt2} and~\cite[Lemma~15(2)]{Galt2}, it is sufficient to consider the case $n_1=n_2$ is odd and $n_3$ is even.

\begin{lemma}\label{m=3} 
Let $T$ be a maximal torus of $G=\Omega_{2n}^-(q)$ corresponding to an element $w_0w$ of the Weyl group with the cycle type $(n_1)(n_2)(\overline{n_3})$ or $(\overline{n_1})(\overline{n_2})(n_3)$. Let $\widetilde{T}$ and $\widetilde{N}$ be the images of $T$ and $N(G,T)$ in $\widetilde{G}=\mathrm{P}\Omega_{2n}^-(q)$, respectively. If $q\equiv3\pmod4$, $n_1=n_2$ is odd, and $n_3$ is even, then $\widetilde{T}$ does not have a complement in~$\widetilde{N}$.
\end{lemma}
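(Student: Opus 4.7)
I propose to follow the template of Lemma~\ref{m=4}. Assume for contradiction that $\widetilde{T}$ has a complement $\widetilde{H}$ in $\widetilde{N}$, and let $H \subseteq N(G,T)$ be its preimage. The hypotheses furnish two centralizer elements: $\chi_1 \in C_W(w_0 w)$ (available since $n_1 = n_2$) and $\tau_3 \in C_W(w_0 w)$ (available since $n_3$ is even). Pick $v_1, u_3 \in H$ lifting $\chi_1$ and $\tau_3$, respectively. The matrix form supplied by Proposition~\ref{prop2} gives
$$v_1 = \bd(D_1, D_2, D_3, D_1^{-1}, D_2^{-1}, D_3^{-1})\chi_1, \quad D_i = \diag(\lambda_i, \lambda_i^q, \ldots, \lambda_i^{q^{n_i-1}}).$$

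Since $\widetilde{H} \cap \widetilde{T} = 1$, any preimage in $H$ of a trivial element of $\widetilde{H}$ lands in the center of $G$, which is contained in $\{\pm I\}$. Applying this to $\chi_1^2 = 1$ and $[\chi_1, \tau_3] = 1$, one obtains $v_1^2 = \varepsilon I$ and $[v_1, u_3] = \varepsilon_3 I$ with $\varepsilon, \varepsilon_3 \in \{\pm 1\}$. Arguing as in Lemma~\ref{m=4} (with $m = 3$ playing the role that $m = 4$ did there), the commutation relation forces $\varepsilon_3 = 1$, and then \cite[Lemma~4]{Galt2} yields $\lambda_3^2 = 1$. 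Next, \cite[Lemma~8]{Galt2} applied to $v_1^2 = \varepsilon I$ with $\chi_1$ swapping blocks 1 and 2 produces $D_1 D_2 = \varepsilon I_{n_1}$ together with $\lambda_3^2 = \varepsilon$; combining these gives $\varepsilon = 1$.

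The final step is a determinant computation on the invariant subspace $V_0$ used in \cite{Galt2}:
$$\det(v_1|_{V_0}) = \det(D_1 D_2) \cdot \det(D_3) \cdot \det(\chi_1|_{V_0}) = 1 \cdot 1 \cdot (-1)^{n_1} = -1,$$
using $D_1 D_2 = I_{n_1}$, the fact that $n_3$ even together with $\lambda_3^2 = 1$ forces $\det(D_3) = 1$, and that $\chi_1$ swaps $n_1$ disjoint pairs of basis vectors in $V_0$ with $n_1$ odd. Since $q \equiv 3 \pmod 4$, $-1$ is a non-square in $\F_q^*$, whence $v_1 \notin \Omega_{2n}^-(q)$, contradicting $v_1 \in H \subseteq G$.

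The main obstacle I expect is the step forcing $\varepsilon_3 = 1$. With fewer blocks than in the $m = 4$ setting, there is less room to absorb a $-I$ sign into lifts of the remaining generators of $C_W(w_0w)$, and one has to verify directly from the explicit matrix forms of $v_1$ and $u_3$ that the commutator $[v_1, u_3]$ cannot be $-I$. Once this is settled, the remainder of the argument follows the Lemma~\ref{m=4} template almost verbatim.
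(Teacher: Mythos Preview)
The obstacle you flag is not merely technical---it is fatal to this choice of auxiliary element. In the $m=4$ argument, the step ``$\varepsilon_i=1$'' works because $\chi_2$ acts on blocks $2,3$ while $\tau_1$ (resp.~$\tau_4$) acts on block $1$ (resp.~$4$), leaving block $4$ (resp.~$1$) fixed by both; restricting the commutator to that untouched block forces the scalar to be $1$. With $m=3$, the pair $(\chi_1,\tau_3)$ covers all three blocks and no such restriction is available. Worse, if you carry out the block-$3$ computation you find $D_3^2=\varepsilon_3 I$, i.e.\ $\lambda_3^2=\varepsilon_3$, while \cite[Lemma~8]{Galt2} applied to $v_1^2=\varepsilon I$ already gives $\lambda_3^2=\varepsilon$. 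Thus $\varepsilon_3=\varepsilon$, and the commutator relation yields no new constraint on $v_1$ at all. For the cycle type $(\overline{n_1})(\overline{n_2})(n_3)$ one has $4\mid q^{n_3}-1$ (since $q\equiv 3\pmod 4$ and $n_3$ is even), so $\lambda_3^2=-1$ is genuinely possible; with $\varepsilon=-1$ one then computes $\det(D_1D_2)=(-1)^{n_1}=-1$, $\det(D_3)=\lambda_3^{1+q+\cdots+q^{n_3-1}}=1$ (the exponent is $\equiv 0\pmod 4$), giving $\theta(v_1)=(-1)\cdot 1\cdot(-1)^{n_1}=1$, and no contradiction arises.

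The paper avoids this by replacing $\tau_3$ with the product $\varpi_3\tau_1$ (noting that $\varpi_3,\tau_1\notin W$ individually since each has an odd number of negative cycles, but their product lies in $W$). The commutation relation $vt=\delta tv$ with $t$ lifting $\varpi_3\tau_1$ now involves the $2n_3$-cycle $\varpi_3$ on block $3$; invoking \cite[Lemma~3(2)]{Galt2} shows that $\delta=-1$ would force $n_3$ to be odd, contradicting the hypothesis. Once $\delta=1$, \cite[Lemma~3(1)]{Galt2} gives $\lambda_3^2=1$ directly, and your spinor-norm endgame goes through unchanged. So the fix is not to work harder with $\tau_3$, but to choose a different second generator whose interaction with $\chi_1$ detects the parity of $n_3$.
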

\begin{proof}
Assume on the contrary that $\widetilde{T}$ has a complement $\widetilde{H}$ in $\widetilde{N}$.
Let $H$ be a preimage of $\widetilde{H}$ in $N(G,T)$. Since $\varpi_3,\tau_1\notin W$, we have $\varpi_3\tau_1\in W$ and
$$\langle\chi_1,\varpi_3\tau_1\rangle\leqslant C_W(w_0w).$$
Let $v$ and $t$ be preimages of $\chi_1$ and $\varpi_3\tau_1$ in $H$, respectively.
Then the elements $v,t$ have the form
\begin{center}
$v=\bd(D_1,D_2,D_3,D_1^{-1},D_2^{-1},D_3^{-1})\chi_1, \quad D_i=\diag(\lambda_i, \lambda_i^q, \ldots, \lambda_i^{q^{n_i-1}}),$
\end{center}
\begin{center}
$t=\bd(U_1,U_2,U_3,U_1^{-1},U_2^{-1},U_3^{-1})\varpi_3\tau_1, \quad U_i=\diag(\mu_i, \mu_i^q, \ldots, \mu_i^{q^{n_i-1}})$,
\end{center}
Since $\widetilde{H}$ is a complement to $\widetilde{T}$, we infer that the
following equalities must be hold
$$v^{2}=\varepsilon I, vt=\delta tv, \quad \text{ where } \varepsilon,\delta\in\{1,-1\}.$$
Applying~\cite[Lemma~8]{Galt2} to the equality $v^{2}=\varepsilon I$, we get that $D_1D_2=\varepsilon I_1$ and $\lambda_3^2=\varepsilon$.

Consider the equality $vt=\delta tv$. If $\delta=-1$, then by~\cite[Lemma 3(2)]{Galt2} we get that $n_3$ is odd; a contradiction. Thus,
$\delta=1$ and it follows from \cite[Lemma 3(1)]{Galt2} that $\lambda_3^2=1$. 

Hence, $\varepsilon=1$ and $D_1D_2=1$. Since $q\equiv3\pmod4$, we have
$$\theta(v)=\det(D_1D_2D_3)\theta(\chi_1)=\lambda_3^{n_3}(-1)^{n_1}=-1\notin(\F_q^*)^2$$
and $v\notin\Omega_{2n}^-(q)$, a contradiction.
\end{proof}

When $m=2$, taking into account~\cite[Lemma~15(2)]{Galt2}, it remains to consider the following case.  
\begin{lemma}\label{m=2} 
Let $T$ be a maximal torus of $G=\Omega_{2n}^-(q)$ corresponding to an element $w_0w$ of the Weyl group with the cycle type  
$(\overline{n_1})(n_2)$. Let $\widetilde{T}$ and $\widetilde{N}$ be the images of $T$ and $N(G,T)$ in $\widetilde{G}=\mathrm{P}\Omega_{2n}^-(q)$, respectively. If $q\equiv3\pmod4$ and $n_1=n_2$ is even, then $\widetilde{T}$ does not have a complement in~$\widetilde{N}$.
\end{lemma}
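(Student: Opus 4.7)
The plan is to mimic the contradiction strategy of Lemmas~\ref{m=4} and~\ref{m=3}: suppose $\widetilde T$ admits a complement $\widetilde H$ in $\widetilde N$, pull back to a preimage $H\leqslant N(G,T)$, and force some element of $H$ to have spinor norm $-1$, which is impossible since $-1\notin(\F_q^*)^2$ when $q\equiv 3\pmod 4$. Setting $w_0w=\varpi_1\omega_2$, the useful commuting elements are $\tau_1=\varpi_1^{n_1}$, $\tau_2$, $\varpi_1^2$ and $\omega_2$: each lies in $C_W(w_0w)$ and each has an even number of negative cycles (namely $n_1$, $n_2$, $2$, $0$), so each lies in $W=W_{\overline G}$. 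Unlike Lemma~\ref{m=3}, no swap element $\chi_1$ is available here because the two blocks of $w_0w$ have different types; in particular $C_W(w_0w)$ is abelian.

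I would then take lifts $u_1,u_2,\hat x,\hat y\in H$ of $\tau_1,\tau_2,\varpi_1^2,\omega_2$ in the standard form $\bd(1,X_1,X_2,X_1^{-1},X_2^{-1})\cdot n_w$ and apply Lemmas~3, 4 and~8 of~\cite{Galt2} block-by-block to the relations $u_i^2\in Z$, $\hat x^{n_1}\in Z$, $\hat y^{n_2}\in Z$ and the pairwise commutators in $Z$. The option $u_i^2=-I$ is ruled out because it would demand a primitive fourth root $\lambda$ with $\lambda^{q^{n_i}\pm 1}=1$, impossible since $q\equiv 3\pmod 4$ and $n_i$ even give $q^{n_i}\pm 1\equiv\pm 2\pmod 4$. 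A chain of such deductions then yields: $u_i^2=I$; the ``cross'' diagonal blocks of $u_1,u_2,\hat x,\hat y$ are all scalar $\pm I_{n_j}$ of even size; the commutator $[\hat y,u_2]\in Z$ produces a rigid multiplicative relation between the surviving block-$2$ parameters of $\hat y$ and $u_2$; and the identity $\hat x^{n_1/2}\equiv u_1\pmod Z$ (valid since $\tau_1=(\varpi_1^2)^{n_1/2}$) couples the block-$1$ parameters of $\hat x$ and $u_1$ as well.

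The decisive step is the spinor-norm computation. Because every scalar $\pm I_{n_j}$-block has even size it contributes trivially to $\det$, and because $n_1,n_2$ are even the permutation factors $\theta(\tau_i)=(-1)^{n_i}$, $\theta(\omega_2)$ and $\theta(\varpi_1^2)$ all equal $1$; hence the spinor norms reduce to the determinants of the surviving non-scalar diagonal blocks. Tracking the signs of the four $\pm I_{n_j}$-blocks through the rigid relations above and exploiting the torus identities $\gamma_1^{q^{n_1}+1}=1$ and $\delta^{q^{n_2}-1}=1$ together with $q\equiv 3\pmod 4$, one of the four lifts is forced to have a determinant equal to $-1$, producing a spinor norm outside $(\F_q^*)^2$ and contradicting its membership in $\Omega_{2n}^-(q)$. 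The main obstacle is precisely this final sign-tracking: in Lemma~\ref{m=3} the required $-1$ was supplied cheaply by the factor $\theta(\chi_1)=(-1)^{n_1}$ with $n_1$ odd, but here every permutation factor equals $+1$ and the $-1$ must instead be manufactured from the interaction of the torus arithmetic with the abelian structure of the complement. Verifying that no choice of the residual $\pm$ signs in the lifts evades this contradiction is where the calculation demands genuine care.
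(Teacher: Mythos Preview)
Your strategy---pull back a putative complement, lift generators of $C_W(w_0w)$, and force a spinor-norm contradiction---is the same as the paper's. But what you have written is an outline, not a proof: the ``decisive step'' is only asserted, and you yourself flag that the final sign-tracking ``demands genuine care'' without actually performing it. The paper's argument is much shorter. It simply observes that since $n_1,n_2$ are even one has $\tau_1,\tau_2\in C_W(w_0w)$, hence $\langle\omega_2,\tau_1,\tau_2\rangle\leqslant C_W(w_0w)$, and then notes that this places us exactly in the configuration treated in the proof of \cite[Lemma~21]{Galt2}, where the contradiction is already derived. No new computation is needed, and the extra generator $\varpi_1^2$ you introduce plays no role.

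There is also a technical slip in your sketch. You claim $u_i^2=-I$ is excluded because $q^{n_i}\pm1\equiv\pm2\pmod4$ forbids a primitive fourth root in the torus. That congruence is wrong: with $q\equiv3\pmod4$ and $n_i$ even one has $q^{n_i}\equiv1\pmod4$, so $q^{n_i}-1\equiv0\pmod4$ and the block-$2$ torus condition $\lambda_2^{q^{n_2}-1}=1$ \emph{does} admit a primitive fourth root. The correct reason $u_1^2\neq -I$ is more elementary: writing $u_1=\bd(D_1,D_2,D_1^{-1},D_2^{-1})\tau_1$, conjugation by $\tau_1$ swaps $D_1\leftrightarrow D_1^{-1}$, so the block-$1$ part of $u_1^2$ is $D_1D_1^{-1}=I_{n_1}$ identically, independent of any torus arithmetic; symmetrically for $u_2$. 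Thus $u_i^2=I$ and, consequently, the cross-block parameters are $\pm1$---which is indeed the starting point for the computation in \cite[Lemma~21]{Galt2}. So your plan is salvageable, but as written it neither carries out the computation nor correctly justifies the intermediate step, whereas the paper bypasses both issues by citing the existing lemma.
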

\begin{proof}
Assume on the contrary that $\widetilde{T}$ has a complement $\widetilde{H}$ in $\widetilde{N}$.
Let $H$ be a preimage of $\widetilde{H}$ in $N(G,T)$. Since
$n_1,n_2$ are even, we infer that $\tau_1,\tau_2\in C_W(w_0w)$ and $C_W(w_0w)\geqslant\langle\omega_2,\tau_1,\tau_2\rangle$. 
Now we are in the conditions of the proof of~\cite[Lemma~21]{Galt2} where a contradiction is obtained.
\end{proof}
The remaining case $m=1$ follows from~\cite[Lemma~15(3)]{Galt2}.
Thus, all cases for $m$ are considered and Theorem~\ref{th_Omega} is proved.

\section{Proof of Theorem~\ref{final}}

For groups of Lie type $A_n(q)$ and $C_n(q)$ the theorem follows from~\cite{Galt3} and~\cite{Galt4}, respectively. Groups $B_n(q)$ and $D_n(q)$ were considered in~\cite{Galt2}. The results for twisted classical groups ${}^2A_n(q)$ and ${}^2D_n(q)$ are presented in Theorems~\ref{th_PSU} and~\ref{th_Omega}.

Finite groups of Lie type $E_6(q)$ were considered in~\cite{GS}, and groups $E_7(q)$, $E_8(q)$, and ${}^2E_6(q)$ were considered in~\cite{GS2}. The paper~\cite{GS3} is devoted to exceptional groups $F_4(q)$. In Theorem~\ref{th} we consider groups $G_2(q)$, ${}^2G_2(q)$, and ${}^3D_4(q)$.

Finally, suppose that $G={}^2F_4(q)$ or $G={}^2B_2(q)$. Let $T$ be a maximal torus of $G$ and $N$ be its algebraic normalizer. Since ${}^2F_4(q)$ and ${}^2B_2(q)$ are defined over a field of characteristic $2$, it follows from~\cite[Remark 2.5]{GS3} that $N$ splits over $T$.
\vspace{1em}

\noindent The authors are grateful the referee for valuable comments and suggestions on this work.

\Addresses
\end{document}